\newtheorem{theorem}{Theorem}
\theoremstyle{plain}
\newtheorem{definition}[theorem]{Definition}
\newtheorem{example}[theorem]{Example}
\newtheorem{lemma}[theorem]{Lemma}
\newtheorem{proposition}[theorem]{Proposition}
\newtheorem{remark}[theorem]{Remark}
\numberwithin{equation}{section}
\renewcommand{\P}{ {\mathbb{P}} }
\begin{document}
\title{Stochastic control with rough paths}
\author{Joscha Diehl, Peter Friz and Paul Gassiat}

\begin{abstract}
We study a class of controlled rough differential equations. It is shown
that the value function satisfies a HJB type equation; we also establish a
form of the Pontryagin maximum principle. Deterministic problems of this
type arise in the duality theory for controlled diffusion processes and
typically involve anticipating stochastic analysis. We propose a formulation
based on rough paths and then obtain a generalization of Roger's duality
formula [L. C. G. Rogers, Pathwise Stochastic Optimal Control. SIAM J.
Control Optim. 46, 3, 1116-1132, 2007] from discrete to continuous time. We
also make the link to old work of [M. H. A. Davis and G. Burstein, A
deterministic approach to stochastic optimal control with application to
anticipative optimal control. Stochastics and Stochastics Reports,
40:203--256, 1992].
\end{abstract}

\subjclass{Primary 60H99}
\keywords{Stochastic control, Duality, Rough Paths}
\thanks{ The first author is supported by the DFG project SPP1324. The last
two authors have received partial funding from the European Research Council
under the European Union's Seventh Framework Programme (FP7/2007-2013) / ERC
grant agreement nr. 258237. We thank the attendants of the SSSC 2012
workshop, Madrid, for valuable discussions. }
\maketitle



\section{Introduction}


In classical works \cite{MR0451404,MR0461664} Doss and Sussmann studied the
link between ordinary and stochastic differential equations (ODEs and SDEs,
in the sequel). In the simplest setting, consider a nice vector field $%
\sigma $ and a \textit{smooth} path $B:\left[ 0,T\right] \rightarrow \mathbb{%
R}$, one solves the (random) ordinary differential equation%
\begin{equation*}
\dot{X}=\sigma \left( X\right) \dot{B},
\end{equation*}
so that $X_{t}=e^{\sigma B_{t}}X_{0}$, where $e^{\sigma B_{t}}$ denotes
flow, for unit time, along the vector field $\sigma \left( \cdot \right)
B_{t}$. The point is that the resulting formula for $X_{t}$ makes sense for
any continuous path $B$, and in fact the It\^{o}-map $B\mapsto X$ is
continuous with respect to $\left\Vert \cdot \right\Vert _{\infty ;\left[ 0,T%
\right] }$. In particular, one can use this procedure for every
(continuous)\ Brownian path; the so-constructed SDE solution then solves the
Stratonovich equation,%
\begin{equation*}
dX=\sigma \left( X\right) \circ dB=\sigma \left( X\right) dB+\frac{1}{2}%
\left( \sigma \sigma ^{\prime }\right) \left( X\right) dt.
\end{equation*}%
When $B=B\left( \omega \right) :\left[ 0,T\right] \rightarrow \mathbb{R}^{d}$
is a multidimensional Brownian motion, which we shall assume from here on,
this construction fails and indeed the It\^{o}-map is notorious for its lack
of (classical) continuity properties. Nonetheless, many approximations, $%
B^{n}\rightarrow $ $B$, examples of which include the piecewise linear -,
mollifier - and Karhunen-Love approximations, have the property that the
corresponding (random) ODE\ solutions, say to $dX^{n}=b\left( X^{n}\right)
dt+\sigma \left( X^{n}\right) dB^{n}$ with $\sigma =\left( \sigma _{1},\dots
,\sigma _{d}\right) $, convergence to the solution of the Stratonovich
equation%
\begin{equation*}
dX=b\left( X\right) dt+\sigma \left( X\right) \circ dB.
\end{equation*}%
(At least in the case of piecewise linear approximations, this result is
known as Wong-Zakai theorem\footnote{%
... although, strictly speaking, the multi-dimensional case is due to M.
Clark and Stroock--Varadhan.}) It was a major step forward, due to T. Lyons 
\cite{lyons98}, to realize that the multidimensional SDE case can also be
understood via deterministic differential equations (known as \textit{rough
differential equations}); they do, however, require more complicated driving
signals (known as \textit{rough paths}) which in the present context are of
the form%
\begin{equation*}
\mathbf{B}\left( \omega \right) :\left[ 0,T\right] \rightarrow \mathbb{R}%
^{d}\oplus so\left( d\right) ,
\end{equation*}%
and "contain", in addition to the Brownian sample path, \textit{L\'{e}vy
stochastic area}, viewed as process with values in $so\left( d\right) $.
Among the \textit{many} applications of rough paths to stochastic analysis
(eg. \cite{FV} and the references therein) let us just mention here that (i) 
\textit{all Wong--Zakai type results} follow from $B^{n}\rightarrow $ $%
\mathbf{B}$ in a rough path metric (ii) the (rough)pathwise resolution of
SDE can handle immediately situations with anticipating randomness in the
coefficients; consistency with anticipating SDE in the sense of
Ocone--Pardoux \cite{MR0995291} was established in \cite{CFV}.

The purpose of this paper is to explore the \textit{interplay of rough paths
with control problems}; more specifically, in the context of \textit{%
controlled} differential equations with usual aim of maximizing a certain
payoff function over a fixed time horizon $\left[ 0,T\right] $. As is well
known, the dynamic programming principle, in the context of ODEs, leads to
Hamilton--Jacobi (HJ) equations for the value function, i.e. non-linear
first order partial differential equations, for the value function. Optimal
control can be characterized with the aid of the Pontryagin maximum
principle (PMP) which essentially is the method of characteristics applied
to the HJ\ equations. As will be discussed in detail in \textbf{Section \ref%
{sec:deterministicControl}} of this paper, all this can be done for general
(i.e. deterministic) rough differential equations, say of the form%
\begin{equation*}
dX=b\left( X,\mu \right) dt+\sigma \left( X\right) d\mathbf{\eta ,}
\end{equation*}%
where $\mathbf{\eta }$ is a rough path and $\mu =\left( \mu _{t}\right) $ a
control. The value function then solves a so-called rough viscosity
equation; as introduced in \cite{CFO,D12}; see also the pathwise stochastic
control problems proposed by Lions--Souganidis \cite{LS98b}, further studied
by Buckdahn--Ma \cite{Bu}. One can, of course, apply this (rough)pathwise to
SDEs, just take $\mathbf{\eta =B}\left( \omega \right) $; however, the
(optimal) control $\mu =\mu _{t}\left( \omega \right) $ will depend
anticipatingly on the Brownian driver. Moreover, the $\omega $-wise
optimization has (at first glance) little to with the classical stochastic
control problem in which one maximizes the expected value, i.e. an average
over all $\omega $'s, of a payoff function.

Making the link between deterministic and such classical stochastic control
problem is the purpose of \textbf{Section \ref{sec:dualityResults}}. In a
discrete time setting, Wets \cite{Wets} first observed that stochastic
optimization problems resemble deterministic optimization problems up to the
nonanticipativity restriction on the choice of the control policy. The
continuous time setting, i.e. studying the link between \textit{controlled}
ordinary and \textit{controlled} stochastic differential equations, goes
back to Davis--Burstein \cite{DB}. The (meta) theorem here is actually a
duality of the form%
\begin{equation*}
\left( D1\right) :\sup_{\nu }\mathbb{E}\left[ ...\right] =\mathbb{E}\left[
\sup_{\mu }\left[ ...+P^{\ast }\right] \right]
\end{equation*}%
for a suitable penality $P^{\ast }$, or more generally%
\begin{equation*}
\left( D2\right) :\sup_{\nu }\mathbb{E}\left[ ...\right] =\inf_{P}\mathbb{E}%
\left[ \sup_{\mu }\left[ ...+P\right] \right] .
\end{equation*}%
where $\nu $ denotes an adapted control, $\mu $ a possibly anticipating
control, the dots $\dots $ may stand for a payoff such as $g\left(
X_{T}\right) $. Note that $\left( D2\right) $ has an immediate practical
advantage: any choice of $P$ gives an upper bound ("duality bound") on the
value function and therefore complements lower bounds obtained from picking
a particular strategy $\nu $.

In the context of continuous time optimal stopping problems, $\left(
D1\right) $ was established by Davis--Karatzas \cite{DK}, $\left( D2\right) $
is due to Rogers \cite{R02}, see also \cite{HL}, with important applications
to the (Monte Carlo) pricing of American options. The extension of $\left(
D2\right) $, with concrete martingale penality terms, to general control
problem in \textit{discrete time only} was carried out by Rogers in \cite%
{R07}, see also \cite{brownSmithSun}.

While the discrete time setting of Rogers apriori avoids all technicalities
related to measurability (e.g. $\sup_{\mu }$ vs. $ess\sup_{\mu }$, the
meaning of $X$ controlled by anticipating $\mu ^{\ast }$ ...)\ such things
obviously matter in the work of Davis--Burstein. And indeed, the
Ocone--Pardoux techniques (notably the representation of anticipating SDE
solutions via flow decomposition) play a key role in their work.

Our contribution in \textbf{section \ref{sec:dualityResults}} is then
twofold:

(I). We give a "general" duality result in continuous time and see how it
can be specialized to yield a version of Roger's duality $\left( D2\right) $
for control problems in continous time. Another specialization leads to the
Davis--Burstein result, which we review in this context. (Both approaches
are then compared explicitly via computations in LQC problems.)

(II) We make the case that \textit{rough path analysis }is ideal to
formulate and analyze these problems. Indeed, it allows to write down, in a
meaningful and direct way, all the quantities that one wants to write down -
without any headache related to afore-mentioned
(measurability/anticipativity) technicalities : throughout, all quantities
depend continuously on some abstract rough path $\mathbf{\eta }$ and are
then \ - trivially - measurable upon substituion\textbf{\ }$\mathbf{\eta
\leftarrow B}\left( \omega \right) $.

\section{Notation}

For $\alpha > \frac{1}{3}$ denote by $\mathcal{C}^{0,\alpha} = \mathcal{C}%
^{0,\alpha}(E)$ the space of geometric, $\alpha$-H\"{o}lder rough paths over 
$E$, where $E$ is a Banach space chosen according to context. \footnote{%
For $\alpha > \frac{1}{2}$ these are just H\"older continuous $E$-valued
paths. For $\alpha \in (1/3,1/2]$ additional ``area'' information is
necessary. We refer to \cite{LQ} and \cite{FV} for background on rough path
theory.} %
%
On this space, we denote by $\rho_{\alpha-\text{H\"{o}l}}$ 
the corresponding inhomgenous distances.

Let $U$ be some separable metric space (the control space). Denote by $%
\mathcal{M}$ the class of measurable controls $\mu: [0,T] \to U$.

When working on a filtered probability space $(\Omega, {\mathcal{F}} ,\P )$, 
$\mathcal{A}$ will denote the class of progressively measurable controls $%
\nu: \Omega \times [0,T] \to U$.

\section{Deterministic control with rough paths}

\label{sec:deterministicControl}

Let $\eta :[0,T]\rightarrow \mathbb{R}^{d}$ be a smooth path. Write $%
X=X^{t,x,\mu}$ for the solution to the controlled ordinary differential
equation%
\begin{equation}
dX^{t,x,\mu}_s = b\left( X^{t,x,\mu}_s,\mu_s\right) ds+\sigma \left(
X^{t,x,\mu}_s \right) d\eta_s, \qquad X^{t,x,\mu}_t = x  \label{dX_ODE}
\end{equation}%
Classical control theory allows to maximize $\int_{t}^{T}f\left(
s,X_{s}^{t,x,u},\mu_{s}\right) ds+g\left( X_{T}^{\mu}\right) $ over a class
of admissible controls $\mu$. As is well-known, 
\begin{equation}
v\left( t,x\right) :=\sup_{\mu}\left\{ \int_{t}^{T}f\left(
s,X_{s}^{t,x,\mu},\mu_{s}\right) ds+g\left( X_{T}^{t,x,\mu}\right) \right\}
\label{V_ODE}
\end{equation}%
is the (under some technical conditions:\ unique, bounded uniformly
continuous) viscosity solution to the HJB\ equation%
\begin{align*}
-\partial _{t}v - H\left( t,x,Dv\right) - \left\langle \sigma \left(
x\right), Dv\right\rangle \dot{\eta} &= 0, \\
v(T,x) &= g(x).
\end{align*}
where $H$ acting on $v$ is given by 
\begin{equation}  \label{equ:H}
H\left( t, x, p\right) = \sup_{u}\left\{ \left\langle b\left( x, u\right) ,
p\right\rangle +f\left( t,x,u\right) \right\}
\end{equation}
Now, \eqref{dX_ODE} also makes sense for a driving rough path (including a
controlled drift terms to the standard setting of RDEs is fairly
straight-forward; for the reader's convenience proofs are given in the
appendix). This allows to consider the optimization problem \eqref{V_ODE}
for controlled RDEs. 

\subsection{HJB equation}

The main result here is that the corresponding value function satisfies a
``rough PDE''. Such equations go back to Lions-Souganidis (in \cite{LS98b}
they consider a pathwise stochastic control problem and give an associated
stochastic HJB equation, see also \cite{Bu}; these correspond to ${\mathbf{%
\eta}}=\mathbf{B}(\omega)$ in the present section). However their
(non-rough) pathwise setup is restricted to commuting diffusion vector
fields $\sigma_1, \ldots, \sigma_d$ (actually, \cite{LS98b} consider
constant vector fields). Extensions to more general vector fields via a
rough pathwise approach was then obtained in \cite{CFO} (see also \cite{DFO}%
).

\begin{definition}
\label{defRoughPDE} Let $\mathbf{\eta } \in \mathcal{C}^{0,\alpha}$ be a
geometric rough path, $\alpha \in (0,1]$. Assume $F,G,\phi$ to be such that
for every smooth path $\eta$ there exists a unique BUC viscosity solution to 
\begin{align*}
-\partial_t v^\eta - F(t,x,v^\eta,Dv^\eta,D^2v^\eta) - G(t,x,v^\eta,Dv^\eta) 
\dot{\eta}_t &= 0, \\
v^\eta(T,x) &= \phi(x).
\end{align*}

We say that $v \in BUC$ solves the \textit{rough partial differential
equation} 
\begin{align*}
- dv - F(t,x,v,Dv,D^2v) dt - G(t,x,v,Dv) d\mathbf{\eta}_t &= 0, \\
v(T,x) &= \phi(x),
\end{align*}
if for every sequence of smooth paths $\eta^n$ such that $\eta^n \to \mathbf{%
\eta}$ in rough path metric we have locally uniformly 
\begin{align*}
v^{\eta^n} \to v.
\end{align*}
\end{definition}

\begin{remark}
\begin{enumerate}
\item We remark that uniqueness of a solution, if it exists, is built into
the definition (by demanding uniqueness for the approximating problems).

\item In special cases (in particular the gradient noise case of the
following theorem) it is possible to define the solution to a rough PDE
through a coordinate transformation, if the vector fields in front of the
rough path are smooth enough. This approach is followed in \cite{LSSemi}.

The two definitions are equivalent, if the coefficients admit enough
regularity (see \cite{CFO}). In the following theorem the coordinate
transformation is not applicable, since $\sigma$ is only assumed to be $%
\func{Lip}^\gamma$ instead of $\func{Lip}^{\gamma+2}$.
\end{enumerate}
\end{remark}

\begin{theorem}
\label{thm:roughHJ} Let $\mathbf{\eta } \in \mathcal{C}^{0,\alpha}( {\mathbb{%
R}} ^d)$ be a rough path, $\alpha \in (\frac{1}{3},\frac{1}{2}]$. Let $%
\gamma > 1/\alpha$. Let $b: {\mathbb{R}} ^e \times U \to {\mathbb{R}} ^e$ be
continuous and let $b(\cdot,u) \in \func{Lip}^1( {\mathbb{R}} ^e)$ uniformly
in $u \in U$. Let $\sigma_1,\dots,\sigma_d \in \func{Lip}^\gamma( {\mathbb{R}%
} ^e)$. Let $g \in BUC( {\mathbb{R}} ^e)$. Let $f: [0,T] \times {\mathbb{R}}
^e \times U \to {\mathbb{R}} $ be bounded, continuous and locally uniformly
continuous in $t,x$, uniformly in $u$.

For $\mu \in \mathcal{M}$ consider the RDE with controlled drift \footnote{%
Extension to time-dependent $b$, $\sigma$ would be straightforward, see \cite%
[ch. 12]{FV}. It is also possible to consider the controlled hybrid RDE/SDE $%
dX = b(X,\mu) dt + \tilde \sigma(X, \mu) dW + \sigma(X) d{\mathbf{\eta}}$,
see \cite{D12}.} (Theorem \ref{Thm:RDEWithControlledDrift}), 
\begin{equation}  \label{eq:controlledRDE}
dX^{t,x,\mu,\mathbf{\eta}}=b\left( X^{t,x,\mu,\mathbf{\eta}},\mu\right)
dt+\sigma \left( X^{t,x,\mu,\mathbf{\eta}}\right) d\mathbf{\eta}, \quad
X^{t,x,\mu,\mathbf{\eta}}_t = x.
\end{equation}
Then 
\begin{equation*}
v\left( t,x\right) := v^{\mathbf{\eta }}\left( t,x\right) := \sup_{\mu \in 
\mathcal{M}} \left\{ \int_{t}^{T}f\left( s,X_{s}^{t,x,\mu,{\mathbf{\eta}}%
},\mu_{s}\right) ds+g\left( X_{T}^{t,x,\mu,{\mathbf{\eta}}}\right) \right\}
\end{equation*}%
is the unique bounded, uniformly continuous viscosity solution to the rough
HJ equation%
\begin{align}  \label{equ:RoughHJB}
\begin{split}
-dv - H\left( x,Dv\right) dt-\left\langle \sigma \left( x\right)
,Dv\right\rangle d\mathbf{\eta } &= 0, \\
v(T,x) &= g(x).
\end{split}%
\end{align}
\end{theorem}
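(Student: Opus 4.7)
My plan is to reduce the rough problem to classical HJB theory for each smooth approximation of the driver $\mathbf{\eta}$, and then to pass to the limit uniformly via the continuity of the controlled Itô--Lyons map in the rough path metric. More precisely, the definition of rough viscosity solution (Definition \ref{defRoughPDE}) prescribes: pick any sequence of smooth paths $\eta^n \to \mathbf{\eta}$ in $\rho_{\alpha\text{-H\"ol}}$, solve the classical PDE, and identify the locally uniform limit. Because uniqueness is built into the definition, the content of the theorem is (i)~existence of the limit, (ii)~its identification with the candidate value function $v^{\mathbf{\eta}}$, and (iii)~its BUC regularity.

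For step (i)--(ii), fix a smooth approximation $\eta^n$. Under the standing hypotheses ($b(\cdot,u)\in\Lip^1$ uniformly in $u$, $f$ bounded and uniformly continuous in $(t,x)$ uniformly in $u$, $g\in BUC$), standard deterministic optimal-control theory (Crandall--Ishii--Lions) gives that
\[
v^{\eta^n}(t,x)=\sup_{\mu\in\mathcal{M}}\Bigl\{\int_t^T f(s,X_s^{t,x,\mu,\eta^n},\mu_s)\,ds+g(X_T^{t,x,\mu,\eta^n})\Bigr\}
\]
is the unique BUC viscosity solution of the classical HJB equation with driver $\eta^n$, and that $v^{\eta^n}$ is BUC with modulus and sup-norm bound depending only on $\|g\|_\infty$, $\|f\|_\infty$, and on the regularity constants of $b,\sigma,f,g$ (not on $n$). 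Hence the sequence $(v^{\eta^n})$ is equibounded and equicontinuous on compacts once we show equicontinuity in $(t,x)$ uniformly in $n$; this is standard once one has uniform estimates for the controlled flow $X^{t,x,\mu,\eta^n}$.

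The main obstacle, and the place where the rough-path machinery is used, is to show that $v^{\eta^n}(t,x)\to v^{\mathbf{\eta}}(t,x)$ locally uniformly in $(t,x)$. I would prove this by showing the stronger statement
\[
\sup_{\mu\in\mathcal{M}}\;\sup_{s\in[0,T]}\bigl|X^{t,x,\mu,\eta^n}_s-X^{t,x,\mu,\mathbf{\eta}}_s\bigr|\xrightarrow[n\to\infty]{}0,
\]
locally uniformly in $(t,x)$. This is exactly the content of the continuity of the controlled Itô--Lyons map established in Theorem \ref{Thm:RDEWithControlledDrift}: crucially, the a~priori bounds and local Lipschitz estimates on RDE solutions depend only on $\|\sigma\|_{\Lip^\gamma}$ (independent of the control) and on $\|b(\cdot,u)\|_{\Lip^1}$ taken uniformly in $u\in U$, so the rate of convergence can be taken uniform in $\mu\in\mathcal M$. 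Combined with the boundedness of $f$ and continuity of $g$ (and a standard argument controlling differences of suprema by suprema of differences), this gives $v^{\eta^n}\to v^{\mathbf{\eta}}$ locally uniformly, which is both the existence of the rough-path limit and its identification with $v^{\mathbf{\eta}}$.

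Finally, BUC of the limit follows from the uniform (in $n$) modulus of continuity of the $v^{\eta^n}$, inherited by the locally uniform limit (upgraded to global uniform continuity using the uniform sup-norm bound and the compactness of $[0,T]$ together with uniform continuity in $x$). Uniqueness is automatic from Definition \ref{defRoughPDE}, since any BUC solution in the rough sense must coincide with the (unique) limit of the classical $v^{\eta^n}$. The key technical point throughout is thus the \emph{uniformity in the control $\mu$} of the RDE stability estimate; once that is in hand, the classical HJB theory and the definition of rough viscosity solution do the rest.
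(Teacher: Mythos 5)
Your proposal is correct and follows essentially the same route as the paper: reduce to the classical HJB equation for each smooth approximation $\eta^n$, bound $|v^{\eta^n}-v^{\mathbf{\eta}}|$ by the supremum over $\mu$ of the difference of payoffs, and conclude via the continuity of the controlled RDE solution map (Theorem \ref{Thm:RDEWithControlledDrift}) \emph{uniformly in $\mu$}, which is exactly the key point the paper also isolates. Uniqueness is then automatic from Definition \ref{defRoughPDE}, as you note.
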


\begin{proof}
The case $f=0, \sigma \in \func{Lip}^{\gamma + 2}$ appears in \cite{D12}.
The general case presented here is different, since we cannot use a
coordinate transformation. Let a smooth sequence $\eta^n$ be given, such
that $\eta^n \to \mathbf{\eta}$ in $\mathcal{C}^{0,\alpha}$. Let 
\begin{align*}
v^n(t,x) := \sup_{\mu\in\mathcal{M}} \Xi_{t,x}[\eta^n,\mu],
\end{align*}
where $\Xi_{t,x}[\mathbf{\gamma},\mu] := \int_{t}^{T} f\left(
s,X_{s}^{t,x,\mu,\mathbf{\gamma}},\mu_{s}\right) ds + g\left( X_{T}^{t,x,\mu,%
\mathbf{\gamma}}\right)$ for any (rough) path $\mathbf{\gamma}$. By
classical control theory (e.g. Corollary III.3.6 in \cite{BCD}) 
we have that $v^n$ is the unique bounded, continuous viscosity solution to 
\begin{align*}
-dv^n - H\left( x,Dv^n\right) dt-\left\langle \sigma \left( x\right)
,Dv^n\right\rangle d\eta^n &= 0, \\
v^n(T,x) &= g(x).
\end{align*}

Then 
\begin{align*}
|v^n(t,x) - v(t,x)| &\le \sup_{\mu \in \mathcal{M}} \left| \Xi_{t,x}[{%
\mathbf{\eta}},\mu] - \Xi_{t,x}[\eta^n,\mu] \right|.
\end{align*}
Note that $\Xi$ is continuous in $\mathbf{\gamma}$ uniformly in $\mu$ (and $%
(t,x)$) by Theorem \ref{Thm:RDEWithControlledDrift}. Therefore, $v^n$
converges locally uniformly to $v$ 
and then, by Definition \ref{defRoughPDE}, $v$ solves \eqref{equ:RoughHJB}.
\end{proof}

\begin{example}
\label{ex:additive} In the case with additive noise ($\sigma(x) \equiv Id$)
and state-independent gains / drift ($f(s,x,u) = f(s,u), b(x,u)=b(s,u)$),
this rough deterministic control problem admits a simple solution. Indeed,
if $v^0$ is the value function to the standard deterministic problem for ${%
\mathbf{\eta}} \equiv 0$, i.e. 
\begin{align*}
v^0(t,x) &= \sup_{\mu \in \mathcal{M}} \left\{ \int_t^T f(s,\mu_s) ds +
g\left( x + \int_t^T b(s,\mu_s)ds\right) \right\},
\end{align*}
then one has immediately (since $\eta$ only appears in the terminal gain) 
\begin{align*}
v^{\mathbf{\eta}}(t,x) &= v^0(t, x + \eta_T - \eta_t).
\end{align*}
When $v^0$ has a nice form, this gives simple explicit solutions. For
instance, assuming in addition $f \equiv 0$, $U$ convex and $b(s,u)=u$, $v^0$
is reduced to a static optimization problem and 
\begin{align*}
v^{\mathbf{\eta}}(t,x) &= \sup_{u \in U} g\big(x + \eta_T - \eta_t + (T-t) u%
\big).
\end{align*}
\end{example}

\subsection{Pontryagin maximum principle}

If $\eta$ is smooth, then Theorem 3.2.1 in \cite{YZ} gives the following
optimality criterium.

\begin{theorem}
\label{thm:classicalPMP} Let $\eta$ be a smooth path. Assume $b,f,g$ be $C^1$
in $x$, such that the derivative is Lipschitz in $x,u$ and bounded and let $%
\sigma,g$ be $C^1$ with bounded, Lipschitz first derivative. Let $\bar X,
\bar \mu$ be an optimal pair for problem \eqref{V_ODE} with $t=0$. Let $p$
be the unique solution to the backward ODE 
\begin{align*}
-\dot p(t) &= Db(\bar X_t, \bar \mu_t) p(t) + D\sigma(\bar X_t) \dot{\eta}_t
p(t) + Df(\bar X_t, \bar \mu_t), \\
p(T) &= D g( \bar X_T ).
\end{align*}

Then 
\begin{align*}
b(\bar X_t, \bar \mu_t) p(t) + f(\bar X_t, \bar \mu_t) = \sup_{u \in U} 
\left[ b(\bar X_t, u) p(t) + f(\bar X_t, u) \right], \qquad a.e.\ t \in
[0,T].
\end{align*}
\end{theorem}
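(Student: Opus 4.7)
The plan is to follow the classical needle-variation argument: perturb the optimal control on a short interval, linearize the state equation, and then introduce the adjoint $p$ to convert the first-order optimality inequality into a pointwise Hamiltonian condition. Since $\eta$ is smooth, no rough path machinery is needed here; the $C^{1}$-regularity of $b,\sigma,f,g$ with bounded Lipschitz derivatives is exactly what makes each step rigorous, and this is essentially the textbook derivation recalled as Theorem 3.2.1 of \cite{YZ}.

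First, fix $v \in U$ and a Lebesgue point $\tau \in (0,T)$ of $s \mapsto (b(\bar X_s, \bar \mu_s), f(\bar X_s, \bar \mu_s))$, and introduce for small $\epsilon > 0$ the spike variation
\begin{equation*}
\mu^{\epsilon}_{s} = \begin{cases} v, & s \in [\tau - \epsilon, \tau], \\ \bar \mu_{s}, & \text{otherwise},\end{cases}
\end{equation*}
with $X^{\epsilon}$ the corresponding trajectory. Since $\eta$ is smooth and $b,\sigma$ are Lipschitz in $x$, one has $X^{\epsilon}_{s} = \bar X_{s}$ for $s \le \tau - \epsilon$, and by splitting the integral on $[\tau - \epsilon, \tau]$ and using the Lebesgue-point property,
\begin{equation*}
X^{\epsilon}_{\tau} - \bar X_{\tau} = \epsilon\bigl[b(\bar X_{\tau}, v) - b(\bar X_{\tau}, \bar \mu_{\tau})\bigr] + o(\epsilon);
\end{equation*}
the $\sigma\, d\eta$ contribution over the needle window is only $O(\epsilon^{2})$ because $\sigma(X^{\epsilon}) - \sigma(\bar X) = O(\epsilon)$ and the length of the window is $\epsilon$.

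Next, for $s \ge \tau$, I would prove via Gronwall applied to $X^{\epsilon} - \bar X$ that $Y_{s} := \lim_{\epsilon \to 0} \epsilon^{-1}(X^{\epsilon}_{s} - \bar X_{s})$ exists and solves the linearised ODE
\begin{equation*}
dY_{s} = Db(\bar X_{s}, \bar \mu_{s}) Y_{s}\, ds + D\sigma(\bar X_{s}) Y_{s}\, d\eta_{s}, \qquad Y_{\tau} = b(\bar X_{\tau}, v) - b(\bar X_{\tau}, \bar \mu_{\tau}),
\end{equation*}
and that $Y_{s} = 0$ for $s < \tau$. A Taylor expansion of the cost functional combined with optimality $J(\mu^{\epsilon}) \le J(\bar \mu)$ then yields the first-order inequality
\begin{equation*}
\bigl[f(\bar X_{\tau}, v) - f(\bar X_{\tau}, \bar \mu_{\tau})\bigr] + \int_{\tau}^{T} \langle Df(\bar X_{s}, \bar \mu_{s}), Y_{s}\rangle\, ds + \langle Dg(\bar X_{T}), Y_{T}\rangle \le 0.
\end{equation*}

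Finally, I would introduce $p$ as the solution of the stated backward ODE and verify by direct computation that $\tfrac{d}{ds}\langle p(s), Y_{s}\rangle = -\langle Df(\bar X_{s}, \bar \mu_{s}), Y_{s}\rangle$ on $[\tau, T]$, so that integrating and using $p(T) = Dg(\bar X_{T})$ gives
\begin{equation*}
\int_{\tau}^{T} \langle Df, Y\rangle\, ds + \langle Dg(\bar X_{T}), Y_{T}\rangle = \langle p(\tau), Y_{\tau}\rangle = \bigl\langle p(\tau),\, b(\bar X_{\tau}, v) - b(\bar X_{\tau}, \bar \mu_{\tau}) \bigr\rangle.
\end{equation*}
Substituting back, the first-order inequality becomes $\langle p(\tau), b(\bar X_{\tau}, v)\rangle + f(\bar X_{\tau}, v) \le \langle p(\tau), b(\bar X_{\tau}, \bar \mu_{\tau})\rangle + f(\bar X_{\tau}, \bar \mu_{\tau})$. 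Taking $v$ in a countable dense subset of $U$ and intersecting the full-measure sets of admissible $\tau$'s, then using continuity of $b, f$ in $u$, yields the pointwise maximum principle for a.e.\ $t \in [0,T]$. The main technical obstacle is the linearisation: controlling $\epsilon^{-1}(X^{\epsilon} - \bar X)$ uniformly up to and past the needle window with a clean limit satisfying the correct initial jump at $\tau$; once this is in hand, the adjoint calculation is routine chain-rule bookkeeping.
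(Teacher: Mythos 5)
Your argument is correct and is essentially the same needle-variation proof that the paper invokes by citing Theorem 3.2.1 of \cite{YZ} (the paper gives no independent proof of this classical statement), and it mirrors the structure the authors do write out for the rough analogue: spike variation, linearised (variational) equation, the adjoint identity $\tfrac{d}{ds}\langle p(s),Y_s\rangle=-\langle Df(\bar X_s,\bar\mu_s),Y_s\rangle$, and a Lebesgue-point plus separability argument. No gaps; the only cosmetic caveat is that the adjoint equation should be read with the transposes $Db^{\mathsf T}$, $D\sigma^{\mathsf T}$, as in the standard formulation.
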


Let now $\mathbf{\eta}$ be rough. We have the following equivalent statement.

\begin{theorem}
Assume the same regularity on $b,f,g$ as in Theorem \ref{thm:classicalPMP}.
Assume $\sigma_1,\dots,\sigma_d \in \func{Lip}^{\gamma+2}( {\mathbb{R}} ^e)$%
. Let ${\mathbf{\eta}} \in \mathcal{C}^{0,\alpha}$ be a geometric rough
path, $\alpha \in (1/3,1/2]$. 
Let $\bar X, \bar \mu$ be an optimal pair. Let $p$ be the unique solution to
the controlled, backward RDE 
\begin{align*}
-d p(t) &= Db(\bar X_t, \bar \mu_t) p(t) dt + D\sigma(\bar X_t) p(t) d{%
\mathbf{\eta}}_t + Df(\bar X_t, \bar \mu_t) dt, \\
p(T) &= D g( \bar X_T ).
\end{align*}

Then 
\begin{align*}
b(\bar X_t, \bar \mu_t) p(t) + f(\bar X_t, \bar \mu_t) = \sup_{u\in U} \left[
b(\bar X_t, u) p(t) + f(\bar X_t, u) \right], \qquad a.e.\ t.
\end{align*}
\end{theorem}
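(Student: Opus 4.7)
The plan is to mimic the classical needle-variation proof of Theorem \ref{thm:classicalPMP}, replacing smooth-ODE manipulations by their linear rough-RDE analogues. The strengthened regularity $\sigma \in \Lip^{\gamma+2}$, compared with Theorem \ref{thm:roughHJ}, is exactly what is needed to linearise the rough flow in the initial condition and to read the adjoint equation as a linear backward RDE with $\Lip^{\gamma+1}$ coefficients.

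\textbf{Setup.} Fix $u \in U$ and a Lebesgue point $\tau \in [0,T)$ of the maps $s \mapsto b(\bar X_s, \bar \mu_s)$ and $s \mapsto f(\bar X_s, \bar \mu_s)$. For $\varepsilon > 0$ small, let $\mu^\varepsilon$ be the needle variation equal to $u$ on $[\tau, \tau+\varepsilon]$ and to $\bar \mu$ elsewhere, and let $X^\varepsilon := X^{0,x,\mu^\varepsilon,\mathbf{\eta}}$. I introduce the linear rough RDE
\begin{equation*}
dY_s = Db(\bar X_s, \bar \mu_s)\, Y_s\, ds + D\sigma(\bar X_s)\, Y_s\, d\mathbf{\eta}_s, \qquad Y_\tau = b(\bar X_\tau, u) - b(\bar X_\tau, \bar \mu_\tau),
\end{equation*}
well-posed on $[\tau, T]$ as a linear RDE with time-measurable coefficients. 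The central analytic step is the first-order expansion
\begin{equation*}
\frac{X^\varepsilon_s - \bar X_s}{\varepsilon} \longrightarrow Y_s \quad \text{uniformly on } [\tau+\varepsilon, T], \text{ as } \varepsilon \downarrow 0.
\end{equation*}
Heuristically, on $[\tau, \tau+\varepsilon]$ the perturbation creates a jump of size $\varepsilon\bigl(b(\bar X_\tau, u) - b(\bar X_\tau, \bar \mu_\tau)\bigr) + o(\varepsilon)$, using Lipschitz dependence of the RDE on the drift together with Lebesgue differentiation at $\tau$; on $[\tau+\varepsilon, T]$ the two RDEs share the same rough driver and drift coefficient, and the rescaled difference converges to the linearisation $Y$ by differentiability of the It\^o--Lyons map in the initial condition.

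\textbf{Cost expansion, duality, conclusion.} A Taylor expansion of the cost combined with the first-order estimate yields
\begin{equation*}
J(\mu^\varepsilon) - J(\bar \mu) = \varepsilon\Big( f(\bar X_\tau, u) - f(\bar X_\tau, \bar \mu_\tau) + \int_\tau^T \langle Df(\bar X_s, \bar \mu_s), Y_s \rangle\, ds + \langle Dg(\bar X_T), Y_T\rangle \Big) + o(\varepsilon).
\end{equation*}
Because $\mathbf{\eta}$ is geometric, the product $\langle p_s, Y_s\rangle$ obeys the rough chain rule without an It\^o correction, and the $Db$- and $D\sigma$-cross terms cancel against their transposes in the adjoint equation, leaving $d\langle p, Y\rangle = -\langle Df(\bar X, \bar \mu), Y\rangle\, dt$. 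Integrating from $\tau$ to $T$ and using $p_T = Dg(\bar X_T)$ gives
\begin{equation*}
\langle p_\tau, Y_\tau\rangle = \langle Dg(\bar X_T), Y_T\rangle + \int_\tau^T \langle Df(\bar X_s, \bar \mu_s), Y_s\rangle\, ds.
\end{equation*}
I would justify this identity by smooth approximation $\eta^n \to \mathbf{\eta}$, with the time-dependent coefficients $Db(\bar X_\cdot,\bar \mu_\cdot)$, $D\sigma(\bar X_\cdot)$, $Df(\bar X_\cdot,\bar \mu_\cdot)$ kept fixed; for smooth $\eta^n$ it is classical integration by parts, and both sides are continuous in the driver by stability of linear (forward and backward) RDEs. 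Combining with optimality, $J(\mu^\varepsilon) - J(\bar \mu) \leq 0$, and substituting the duality identity and the initial condition for $Y$, gives
\begin{equation*}
f(\bar X_\tau, u) + \langle p_\tau, b(\bar X_\tau, u)\rangle \leq f(\bar X_\tau, \bar \mu_\tau) + \langle p_\tau, b(\bar X_\tau, \bar \mu_\tau)\rangle
\end{equation*}
at every such $\tau$, which is the claim.

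\textbf{Main obstacle.} The delicate step is the first-order expansion $(X^\varepsilon - \bar X)/\varepsilon \to Y$, which is a Taylor-type statement at the level of the rough It\^o--Lyons map. It combines stability of the forward RDE under drift perturbations (entering as bounded-variation terms) with differentiability of the rough flow in its initial condition; both tools are available under $\sigma \in \Lip^{\gamma+2}$, and this is precisely why the regularity assumption has been strengthened compared with Theorem \ref{thm:roughHJ}.
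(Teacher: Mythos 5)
Your proposal is correct and follows essentially the same route as the paper: needle variation, first-order expansion of the state and cost, and the adjoint integration by parts $\langle p_\tau, Y_\tau\rangle = \langle Dg(\bar X_T), Y_T\rangle + \int_\tau^T \langle Df, Y_s\rangle\, ds$ justified by smooth approximation of the geometric rough path. The one substantive difference concerns the step you flag as the main obstacle: the paper does not argue directly at the level of the It\^o--Lyons map, but uses the flow decomposition of Theorem \ref{Thm:RDEWithControlledDrift}(iv) (this is where $\sigma \in \Lip^{\gamma+2}$ enters) to transform the triple $(\bar X, X^\varepsilon, Y^\varepsilon)$ into solutions of classical ODEs and then quotes the classical spike-variation estimates (Lemma 3.2.2 in \cite{YZ}); it also keeps the perturbation as a forcing term $[b(\bar X_r,\mu_r)-b(\bar X_r,\bar\mu_r)]1_I(r)\,dr$ in an unrescaled $Y^\varepsilon$ rather than as an initial condition at $\tau$, the two being equivalent to first order at Lebesgue points.
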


\begin{remark}
This is the necessary condition for an admissible pair to be optimal. In the
classical setting there do also exist sufficient conditions (see for example
Theorem 3.2.5 in \cite{YZ}). They rely on convexity of the Hamiltonian and
therefore will in general not work in our setting because, informally, the $d%
{\mathbf{\eta}}$-term switches sign all the time.
\end{remark}

%
We prepare the proof with the following Lemma.

\begin{lemma}
\label{lem:spikeVariation} Let $\bar X, \bar \mu$ be an optimal pair. Let $%
\mu$ be any other control. Let $I \subset [0,T]$ be an interval with $|I| =
\varepsilon$. Define 
\begin{align*}
\mu^\varepsilon(t) := 1_I(t) \mu(t) + 1_{[0,T]\setminus I}(t) \bar \mu(t).
\end{align*}
Let $X^\varepsilon$ be the solution to the controlled RDE %
\eqref{eq:controlledRDE} corresponding to the control $\mu^\varepsilon$. Let 
$Y^\varepsilon$ be the solution to the RDE 
\begin{align*}
Y^\varepsilon_t = \int_0^t Db( \bar X_r, \bar \mu_r ) Y^\varepsilon_r dr +
\int_0^t D\sigma( \bar X_r ) Y^\varepsilon_r d{\mathbf{\eta}}_r + \int_0^t %
\left[ b( \bar X_r, \mu_r ) - b( \bar X_r \bar \mu_r ) \right] 1_I(r) dr.
\end{align*}

Then 

\begin{align}
    \label{equ:O1}
    \sup_t |X^\varepsilon_t - \bar X_t|
    &= O(\varepsilon), \\
    %
    %
    \label{equ:o1}
    \sup_t |X^\varepsilon_t - \bar X_t - Y^\varepsilon_t| &= o(\varepsilon), \\
    \label{equ:o2}
    \begin{split}
    J(\mu^\varepsilon) - J(\bar \mu)
    &=
    \langle Dg( \bar X_T ), Y^\varepsilon_T \rangle \\
    &\qquad
    +
    \int_0^T
    \left[
      \langle Df( \bar X_r, \bar \mu_r ), Y^\varepsilon_r \rangle
      +
      \left\{
        f( \bar X_r, \mu_r ) - f( \bar X_r, \bar \mu_r )
      \right\}
      1_I(r)
    \right] dr
    + o(\varepsilon).
    \end{split}
  \end{align}
  
\end{lemma}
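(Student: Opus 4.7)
This is a needle (spike) variation in the style of the classical Pontryagin maximum principle, with Stratonovich/It\^o calculus replaced by the stability of RDEs. Since $\mu^\varepsilon=\bar\mu$ off an interval $I$ of length $\varepsilon$, and since only the drift of the controlled RDE \eqref{eq:controlledRDE} depends on the control, the perturbation acts through the drift and only on a time set of measure $\varepsilon$. I would run the three estimates in the natural order: first the crude $O(\varepsilon)$ bound, then the first-order expansion $Y^\varepsilon$, and finally the cost expansion.

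\textbf{Proof of \eqref{equ:O1} and \eqref{equ:o1}.} For $\Delta^\varepsilon_t:=X^\varepsilon_t-\bar X_t$, writing $b(X^\varepsilon,\mu^\varepsilon)-b(\bar X,\bar\mu)$ as $[b(X^\varepsilon,\mu^\varepsilon)-b(\bar X,\mu^\varepsilon)]+[b(\bar X,\mu^\varepsilon)-b(\bar X,\bar\mu)]$ gives, on the one hand, a term Lipschitz in $x$ times $\Delta^\varepsilon$, and on the other hand a forcing supported in $I$ and bounded by $2\|b\|_\infty$, hence of size $O(\varepsilon)$. Combined with the stability estimate for RDEs from Theorem \ref{Thm:RDEWithControlledDrift} applied to the rough-integral term $\int[\sigma(X^\varepsilon)-\sigma(\bar X)]d\mathbf{\eta}$, a rough Gronwall argument yields \eqref{equ:O1}; the same reasoning applied to the linear RDE for $Y^\varepsilon$ gives $\sup_t|Y^\varepsilon_t|=O(\varepsilon)$. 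For \eqref{equ:o1} set $R^\varepsilon:=X^\varepsilon-\bar X-Y^\varepsilon$ and Taylor expand $b$ and $\sigma$ to first order around $\bar X$. After cancellation with the equation defining $Y^\varepsilon$, one obtains the linear controlled RDE
\begin{equation*}
R^\varepsilon_t=\int_0^t Db(\bar X_r,\bar\mu_r)R^\varepsilon_r\,dr+\int_0^t D\sigma(\bar X_r)R^\varepsilon_r\,d\mathbf{\eta}_r+\rho^\varepsilon_t,
\end{equation*}
where $\rho^\varepsilon$ collects (i) the second-order Taylor remainders of $b$ and $\sigma$, both $O(|\Delta^\varepsilon|^2)=O(\varepsilon^2)$, and (ii) the contribution $[b(X^\varepsilon,\mu)-b(\bar X,\mu)-Db(\bar X,\bar\mu)Y^\varepsilon]1_I$ which is $O(\varepsilon)$ and supported on $|I|=\varepsilon$, hence also $O(\varepsilon^2)$ after integration. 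Stability of the linear RDE driven by $R^\varepsilon\mapsto \rho^\varepsilon$ then delivers $\sup_t|R^\varepsilon_t|=o(\varepsilon)$.

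\textbf{Proof of \eqref{equ:o2}.} Decompose $J(\mu^\varepsilon)-J(\bar\mu)$ as $[g(X^\varepsilon_T)-g(\bar X_T)]+\int_0^T[f(X^\varepsilon_r,\mu^\varepsilon_r)-f(\bar X_r,\bar\mu_r)]dr$. For the terminal cost, Taylor expanding $g$ and using \eqref{equ:o1} gives $\langle Dg(\bar X_T),Y^\varepsilon_T\rangle+o(\varepsilon)$. For the running cost, split the integral into $I$ and $[0,T]\setminus I$: on $[0,T]\setminus I$ the control is $\bar\mu$, and Taylor expanding $f$ in $x$ together with \eqref{equ:o1} yields $\int_{[0,T]\setminus I}\langle Df(\bar X_r,\bar\mu_r),Y^\varepsilon_r\rangle dr+o(\varepsilon)$, while on $I$ one writes $f(X^\varepsilon_r,\mu_r)-f(\bar X_r,\bar\mu_r)=[f(\bar X_r,\mu_r)-f(\bar X_r,\bar\mu_r)]+O(|\Delta^\varepsilon_r|)$, the second term contributing $O(\varepsilon^2)$ after integration over $I$. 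Reassembling the two integrals (the missing piece $\int_I \langle Df(\bar X_r,\bar\mu_r),Y^\varepsilon_r\rangle dr$ is $O(\varepsilon^2)$) gives \eqref{equ:o2}.

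\textbf{Main obstacle.} The delicate point is the $o(\varepsilon)$ bound in \eqref{equ:o1}: a pointwise $O(\varepsilon^2)$ bound on the remainder $\rho^\varepsilon$ is not enough to conclude that its rough-integrated effect is $o(\varepsilon)$, because rough integration requires control on H\"older (or controlled-path) regularity, not just on sup-norm. Making the argument rigorous therefore requires tracking the controlled-rough-path structure of the Taylor remainder of $\sigma$, which is precisely where the stronger hypothesis $\sigma\in\func{Lip}^{\gamma+2}$ of the theorem (beyond the $\func{Lip}^\gamma$ needed for well-posedness of the RDE itself) is used.
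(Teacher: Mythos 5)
Your strategy is sound but genuinely different from the paper's. The paper does not run a direct perturbation/Gronwall argument on the RDE at all: it assembles $(\bar X, X^\varepsilon, Y^\varepsilon)$ into a single joint RDE, applies the flow decomposition of Theorem \ref{Thm:RDEWithControlledDrift}(iv) to write $S^\varepsilon=\phi(t,\tilde S^\varepsilon_t)$ with $\phi$ the flow of $d\phi=\hat\sigma(\phi)\,d\mathbf{\eta}$ and $\tilde S^\varepsilon$ solving a classical controlled ODE, invokes the classical spike-variation estimates (Lemma 3.2.2 in \cite{YZ}) for that ODE, and then transfers \eqref{equ:O1}--\eqref{equ:o1} back through the Lipschitzness of $\phi$ together with a second-order expansion of the flow, $\phi_1-\phi_2-\phi_3=O(|b-a|^2)$ along the relevant diagonal. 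This is exactly where the hypothesis $\sigma\in\func{Lip}^{\gamma+2}$ enters: it guarantees the flow decomposition and the required smoothness of $\phi$, and it lets the paper avoid any rough-Gronwall or controlled-path machinery by outsourcing all $\varepsilon$-asymptotics to the classical ODE lemma. Your route keeps everything in the rough-path picture and would, if completed, be more self-contained; the paper's route is shorter because the hard estimates are already done in \cite{YZ}.

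The one genuine gap is the step you yourself flag: for \eqref{equ:o1} you derive the remainder equation for $R^\varepsilon$ and assert that the forcing $\rho^\varepsilon$ is $O(\varepsilon^2)$, but the term $\int_0^t\bigl[\sigma(X^\varepsilon_r)-\sigma(\bar X_r)-D\sigma(\bar X_r)(X^\varepsilon_r-\bar X_r)\bigr]\,d\mathbf{\eta}_r$ is a rough integral, and a pointwise $O(\varepsilon^2)$ bound on the integrand does not bound the integral; you need $O(\varepsilon^2)$ (or at least $o(\varepsilon)$) control of the integrand as a controlled rough path, i.e.\ of its Gubinelli derivative and remainder, which in turn requires $O(\varepsilon)$ bounds on the controlled-path norm of $X^\varepsilon-\bar X$ rather than just on its sup norm. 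The same caveat applies already to the rough-Gronwall step behind \eqref{equ:O1}. These estimates are true and provable under $\sigma\in\func{Lip}^{\gamma+2}$, but they constitute the entire technical content of the lemma and are not carried out in your sketch; naming the obstacle is not the same as overcoming it. Your treatment of \eqref{equ:o2}, by contrast, matches the paper's ("direct calculation from \eqref{equ:O1} and \eqref{equ:o1}") and is fine as stated.
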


\begin{proof}
The joint RDE reads as 
\begin{align*}
\bar X_t &= x + \int_0^t b\left( \bar X_r,\bar \mu_r \right) dr + \sigma
\left( \bar X_r \right) d\mathbf{\eta}_r \\
X^\varepsilon_t &= x + \int_0^t b\left( X^\varepsilon_r,\mu^\varepsilon_r
\right) dr + \sigma \left( X^\varepsilon_r \right) d\mathbf{\eta}_r \\
Y^\varepsilon_t &= \int_0^t Db( \bar X_r, \bar \mu_r ) Y^\varepsilon_r dr +
\int_0^t D\sigma( \bar X_r ) Y^\varepsilon_r d{\mathbf{\eta}}_r + \int_0^t %
\left[ b( \bar X_r, \mu_r ) - b( \bar X_r \bar \mu_r ) \right] 1_I(r) dr.
\end{align*}

By Theorem \ref{Thm:RDEWithControlledDrift} we can write the solution $%
S^\varepsilon = (\bar X, X^\varepsilon, Y^\varepsilon)$ as $\phi(t, \tilde
S^\varepsilon_t)$ where 
\begin{align*}
d\phi(t,s) = \hat\sigma( \phi(t,s) ) d{\mathbf{\eta}}_t
\end{align*}
and $\tilde S = (\tilde{ \bar{ X } }, \tilde X^\varepsilon, \tilde
Y^\varepsilon)$ with 
\begin{align*}
d\tilde{S}^\varepsilon_t = B(t, \tilde{S}^\varepsilon_t, (\bar \mu_t,
\mu^\varepsilon_t )) dt,
\end{align*}
where 
\begin{align*}
\hat\sigma( x_1, x_2, x_3 ) = \left( 
\begin{matrix}
\sigma(x_1) \\ 
\sigma(x_2) \\ 
D\sigma(x_1) x_3%
\end{matrix}
\right)
\end{align*}
and $B(t, (x_1,x_2,x_3), (u_1, u_2))$ is given by 
\begin{align*}
\left( 
\begin{matrix}
\partial_{x_1} \phi^{-1}_1(t, \phi(t,x)) b(\phi_1(t,x), u_1) \\ 
\partial_{x_2} \phi^{-1}_2(t, \phi(t,x)) b(\phi_2(t,x), u_2) \\ 
\partial_{x_3} \phi^{-1}_3(t, \phi(t,x)) \left[ Db(\phi_1(t,x), u_1)
\phi_3(t,x) + \left\{ b(\phi_1(t,x), u) - b(\phi_1(t,x),\bar u) \right\}
1_I(t) \right] \\ 
\qquad + \partial_{x_1} \phi^{-1}_3(t, \phi(t,x)) b(\phi_1(t,x), u_1)%
\end{matrix}
\right).
\end{align*}
From Lemma 3.2.2 in \cite{YZ} it follows that 
\begin{align*}
\sup_t |\tilde X^\varepsilon_t - \tilde{ \bar{ X_t } }| = O(\varepsilon).
\end{align*}
From this we can deduce \eqref{equ:O1} 
using the Lispchitzness of $\phi$. 
Furthermore, \eqref{equ:o1} follows from 
\begin{align*}
\sup_t |\tilde X^\varepsilon_t - \tilde{ \bar{ X_t } } - \tilde
Y^\varepsilon_t| = o(\varepsilon)
\end{align*}
and 
\begin{align*}
\phi_1(t, (a,b,b-a)) - \phi_2(t, (a,b,b-a)) - \phi_3(t, (a,b,b-a)) = O(
|b-a|^2 ).
\end{align*}
(note that $\partial_{x_1} \phi^{-1}_3(t, \phi(t, (x_1,x_2,0))) = 0$).
Finally \eqref{equ:o2} follows by direct calculation from \eqref{equ:O1} 
and \eqref{equ:o1}.
\end{proof}

\begin{proof}
We follow the idea of the proof of Theorem 3.2.1 in \cite{YZ}. Fix $x_0 \in {%
\mathbb{R}} ^e$. Without loss of generality we take $t=0$. 
Define for $\mu \in \mathcal{M}$ 
\begin{align*}
J( \mu ) := \int_{0}^T f(r,X^{0,x_0,\mu,{\mathbf{\eta}}}_r, \mu_r) dr + g(
X^{0,x_0,\mu,{\mathbf{\eta}}}_T ),
\end{align*}
(so that $v(0,x_0) = \inf_{\mu\in\mathcal{M}} J( \mu )$).

Since ${\mathbf{\eta}}$ is geometric, we have 
\begin{align*}
\langle Dg( \bar X_T ), Y^\varepsilon_T \rangle &= \langle p_T,
Y^\varepsilon_T \rangle - \langle p_0, Y^\varepsilon_0 \rangle \\
&= - \int_0^T \langle Df(\bar X_r, \bar \mu_r), Y^\varepsilon_r \rangle dr +
\int_0^T \langle p(r), \left[ b(\bar X_r, \mu_r) - b(\bar X_r, \bar \mu_r) %
\right] 1_I(r) \rangle dr.
\end{align*}
Here, $Y^\varepsilon$ and $I$ are given as in Lemma \ref{lem:spikeVariation}.

Let any $u \in U$ be given. Let $\mu(t) \equiv u$. Let $t \in [0,T)$ and let 
$\varepsilon >0$ small enough such that $I_\varepsilon := [t,t+\varepsilon]
\subset [0,T]$. Then, combined with Lemma \ref{lem:spikeVariation} we get 
\begin{align*}
0 &\ge J(\mu^\varepsilon) - J(\bar \mu) \\
&= \langle Dg( \bar X_T ), Y^\varepsilon_T \rangle + \int_0^T \left[ \langle
Df( \bar x_r, \bar \mu_r ), Y^\varepsilon_r \rangle + \left\{ f( \bar X_r,
\mu_r ) - f( \bar X_r, \bar \mu_r ) \right\} 1_I(r) \right] dr +
o(\varepsilon) \\
&= - \int_0^T \langle Df(\bar X_r, \bar \mu_r), Y^\varepsilon_r \rangle dr +
\int_0^T \langle p(r), \left[ b(\bar X_r, \mu_r) - b(\bar X_r, \bar \mu_r) %
\right] 1_I(r) \rangle dr \\
&\qquad + \int_0^T \left[ \langle Df( \bar x_r, \bar \mu_r ),
Y^\varepsilon_r \rangle + \left\{ f( \bar X_r, \mu_r ) - f( \bar X_r, \bar
\mu_r ) \right\} 1_I(r) \right] dr + o(\varepsilon) \\
&= \int_t^{t+\varepsilon} \langle p(r), \left[ b(\bar X_r, u) - b(\bar X_r,
\bar \mu_r) \right] \rangle + f( \bar X_r, u ) - f( \bar X_r, \bar \mu_r )
dr + o(\varepsilon).
\end{align*}
Dividing by $\varepsilon$ and sending $\varepsilon \to 0$ yields, together
with the separability of the metric space, the desired result.
\end{proof}

\subsection{Pathwise stochastic control}

We can apply Theorem \ref{thm:roughHJ} to enhanced Brownian motion, i.e.
take $\mathbf{\eta }=\mathbf{B}\left( \omega \right) $, Brownian motion
enhanced with L\'{e}vy's stochastic area which constitutes for a.e. $\omega $
a geometric rough path. The (rough)pathwise unique solution to the RDE with
controlled drift, $X^{\mu,\mathbf{\eta }}|_{\mathbf{\eta =B}\left( \omega
\right) }$ then becomes a solution to the classical stochastic differential
equation (in Stratonovich sense) (Theorem \ref{Thm:RDEWithControlledDrift}). 

\begin{proposition}
\label{prop:measurable} Under the assumptions of Theorem \ref{thm:roughHJ},
the map 
\begin{equation*}
\omega \mapsto \left. \sup_{\mu \in\mathcal{M}} \left\{ \int_{t}^{T}f\left(
s,X^{\mu, \mathbf{\eta }},\mu_{s}\right) ds+g\left( X_{T}^{\mu,\mathbf{\eta }%
}\right) \right\} \right\vert _{\mathbf{\eta =B}\left( \omega \right) }
\end{equation*}
is measurable. In particular, the expected value of the pathwise
optimization problem,%
\begin{equation}
\bar{v}\left( t,x\right) =\mathbb{E}\left[ \left. \sup_{\mu\in\mathcal{M}%
}\left\{ \int_{t}^{T}f\left( s,X^{\mu,\mathbf{\eta }},\mu_{s}\right)
ds+g\left( X_{T}^{\mu,\mathbf{\eta }}\right) \right\} \right\vert _{\mathbf{%
\eta =B}\left( \omega \right) }\right]  \label{equ:Vbar}
\end{equation}%
is well-defined.
\end{proposition}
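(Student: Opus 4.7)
The plan is to bypass the measurability issue raised by $\sup_{\mu}$ over the uncountable class $\mathcal{M}$ by identifying the supremum with a deterministic functional of the driving rough path and then composing with the measurable map $\omega \mapsto \mathbf{B}(\omega)$.

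Step one uses Theorem \ref{thm:roughHJ}: for every rough path $\mathbf{\eta} \in \mathcal{C}^{0,\alpha}$ the supremum in question equals $v^{\mathbf{\eta}}(t,x)$, a quantity built from the deterministic data $(t,x,f,g,b,\sigma)$ alone. It therefore suffices to prove that the map $\mathbf{\eta} \mapsto v^{\mathbf{\eta}}(t,x)$ is Borel measurable on $\mathcal{C}^{0,\alpha}$.

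Step two shows that this map is in fact continuous. The engine is Theorem \ref{Thm:RDEWithControlledDrift}: the solution map $\mathbf{\eta} \mapsto X^{t,x,\mu,\mathbf{\eta}}$ is continuous in the rough path metric $\rho_{\alpha-\text{H\"ol}}$, uniformly over $\mu \in \mathcal{M}$ on bounded balls of $\mathcal{C}^{0,\alpha}$ (the control enters only through the Lipschitz drift $b$, while the vector fields $\sigma_i$ are control-free). Combined with the boundedness and uniform continuity of $f$ and $g$, this yields that $\Xi_{t,x}[\,\cdot\,,\mu]$ is continuous on $\mathcal{C}^{0,\alpha}$, uniformly in $\mu$. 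Passing to the supremum over $\mu$ preserves continuity, so $\mathbf{\eta} \mapsto v^{\mathbf{\eta}}(t,x) = \sup_{\mu} \Xi_{t,x}[\mathbf{\eta},\mu]$ is continuous, in particular Borel measurable.

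Step three recalls that the enhanced Brownian motion defines a Borel measurable map $\omega \mapsto \mathbf{B}(\omega) \in \mathcal{C}^{0,\alpha}$ (standard, see \cite{FV}). Composing with the continuous map of step two gives the required measurability of $\omega \mapsto v^{\mathbf{B}(\omega)}(t,x)$. For the second assertion, boundedness of $f$ and $g$ yields a uniform bound $|v^{\mathbf{\eta}}(t,x)| \le (T-t)\|f\|_\infty + \|g\|_\infty$ independent of $\mathbf{\eta}$, so \eqref{equ:Vbar} is the expectation of a bounded measurable function and is well defined. The single delicate point throughout is the \emph{uniformity in $\mu$} of the continuity in $\mathbf{\eta}$ — without it the transfer of continuity through $\sup_{\mu}$ would fail — but this is precisely what Theorem \ref{Thm:RDEWithControlledDrift} provides.
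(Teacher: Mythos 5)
Your proof is correct and follows essentially the same route as the paper: continuity of $\mathbf{\eta}\mapsto v^{\mathbf{\eta}}$ (via the uniform-in-$\mu$ continuity of $\Xi_{t,x}$ supplied by Theorem \ref{Thm:RDEWithControlledDrift}), measurability of the Stratonovich lift $\omega\mapsto\mathbf{B}(\omega)$, and composition. The explicit uniform bound $(T-t)\|f\|_\infty+\|g\|_\infty$ justifying the expectation is a welcome detail the paper leaves implicit.
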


\begin{proof}
The lift into rough pathspace, $\omega \mapsto \mathbf{B}\left( \omega
\right) $, is measurable. $v^{\mathbf{\eta }}$ as element in BUC space
depends continuously (and hence: measurably) on the rough path $\mathbf{\eta 
}$. Conclude by composition.
\end{proof}

\begin{remark}
Well-definedness of such expressions was a non-trivial technical obstacle in
previous works on pathwise stochastic control; e.g. \cite{DB, Bu}. The use
of rough path theory allows to bypass this difficulty entirely.
\end{remark}

\begin{remark}
\label{rem:uinsig} Let us explain why we only consider the case where the
coefficient $\sigma(x)$ in front of the rough path is not controlled. It
would not be too difficult to make sense of RDEs 
\begin{align*}
dX &= b(t,X,u) dt + \sigma(X,u) d{\mathbf{\eta}}_t,
\end{align*}
assuming good regularity for $\sigma$ and $(u_s)_{s \geq 0}$ chosen in a
suitable class (for instance : $u$ piecewise constant, $u$ controlled by ${%
\mathbf{\eta}}$ in the Gubinelli sense,...). However, in most cases of
interest the control problem would degenerate, in the sense that we would
have 
\begin{align*}
v^{\mathbf{\eta }}\left( t,x\right) &= \sup_{\mu \in \mathcal{M}} \left\{
\int_{t}^{T}f\left( s,X_{s}^{t,x,\mu,{\mathbf{\eta}}},\mu_{s}\right)
ds+g\left( X_{T}^{t,x,\mu,{\mathbf{\eta}}}\right) \right\} \\
&= \int_t^T (\sup_{\mu,x} f(s,\mu,x)) ds + \sup_x g(x).
\end{align*}
The reason is that if $\sigma$ has enough $u$-dependence (for instance if $%
d=1$, $U$ is the unit ball in ${\mathbb{R}} ^e$ and $\sigma(x,u)=u$) and $%
\eta$ has unbounded variation on any interval (as is the case for typical
Brownian paths), the system can essentially be driven to reach any point
instantly.

In order to obtain nontrivial values for the problem, one would need the
admissible control processes to be uniformly bounded in some particular
sense (see e.g. \cite{MN08} in the Young case, where the $(\mu_s)$ need to
be bounded in some H\"older space), which is not very natural (for instance,
Dynamic Programming and HJB-type pointwise optimizations are then no longer
valid).
\end{remark}

\section{Duality results for classical stochastic control}

\label{sec:dualityResults}

We now link the expected value of the pathwise optimization problem, as
given in (\ref{equ:Vbar}), to the value function of the (classical)
stochastic control problem as exposed in \cite{Kr, FS}, 
\begin{equation}
V\left( t,x\right) :=\sup_{\nu \in \mathcal{A}}\mathbb{E}\left[
\int_{t}^{T}f\left( s,X^{t,x,\nu}_s,\nu_{s}\right) ds+g\left(
X_{T}^{t,x,\nu}\right) \right] .  \label{defV}
\end{equation}%
Here, the $\sup$ is taken over all admissible (in particular: adapted)
controls $\nu$ on $\left[ t,T\right]$. 
There are well-known assumptions under which $V$ is a classical (see \cite%
{Kr}) resp. viscosity (see \cite{FS}) solution to the HJB\ equation, i.e.
the non-linear terminal value problem 
\begin{eqnarray*}
-\partial_{t}V-F\left( t,x,DV,D^{2}V\right) &=&0 \\
V\left( T,\cdot \right) &=&g;
\end{eqnarray*}%
uniqueness holds in suitable classes. In fact, assume the dynamics\footnote{%
Again, it would be straightforward to include explicit time dependence in
the vector fields $b$, $\sigma_i$.} 
\begin{align}  \label{eq:controlledSDE}
\begin{split}
dX^{s,x,\nu}_t &= b\left( X^{s,x,\nu}_t,\nu_t\right) dt+ \sum_{i=1}^d
\sigma_i \left( X^{s,x,\nu}_t\right) \circ dB^i_t, \qquad X^{s,x,\nu}_s = x,
\\
&= \tilde b\left( X^{s,x,\nu}_t,\nu_t\right) dt+ \sum_{i=1}^d \sigma_i
\left( X^{s,x,\nu}_t\right) dB^i_t, \qquad X^{s,x,\nu}_s = x,
\end{split}%
\end{align}
where $\tilde b(x,u) = b(x,u) + \frac{1}{2} \sum_{i=1}^d (\sigma_i \cdot
D\sigma_i)(x)$ is the corrected drift. Then the equation is semilinear of
the form 
\begin{equation}  \label{equ:HJB_semilinear}
\begin{split}
-\partial_{t}V - \tilde H\left(t,x,DV\right) - LV &= 0, \\
V\left( T, \cdot \right) &= g.
\end{split}%
\end{equation}
where 
\begin{equation*}
L V = \frac{1}{2} \func{Trace}[ (\sigma \sigma^T) D^2 V ]
\end{equation*}
and $\tilde H$ is given by $\tilde H\left( t, x, p\right) = \sup_{u}\left\{
\left\langle \tilde b\left( x, u\right) , p\right\rangle +f\left(
t,x,u\right) \right\}$.

Let us also write 
\begin{align*}
L^{u} V=\tilde b\left( \cdot ,u\right) DV + L V, \qquad u \in U.
\end{align*}

Denote by $\mathcal{A}$ the class of progressively measurable controls $\nu:
\Omega \times [0,T] \to U$. As before $\mathcal{M}$ is the class of
measurable function $\mu: [0,T] \to U$, with the topology of convergence in
measure (with respect to $dt$).


\begin{theorem}
\label{thm:generalDuality} Let $\mathcal{Z}_ {\mathcal{F}} $ be the class of
all mappings $z: \mathcal{C}^{0,\alpha} \times \mathcal{M} \to {\mathbb{R}} $
such that

\begin{itemize}
\item $z$ is bounded, measurable and continuous in ${\mathbf{\eta}} \in 
\mathcal{C}^{0,\alpha}$ uniformly over $\mu \in \mathcal{M}$ 

\item ${\mathbb{E}} [ z(\mathbf{B}, \nu) ] \ge 0$, if $\nu$ is adapted
\end{itemize}

Let $b: {\mathbb{R}} ^e \times U \to {\mathbb{R}} ^e$ be continuous and let $%
b(\cdot,u) \in \func{Lip}^1( {\mathbb{R}} ^e)$ uniformly in $u \in U$. Let $%
\sigma_1,\dots,\sigma_d \in \func{Lip}^\gamma( {\mathbb{R}} ^e)$, for some $%
\gamma > 2$, $g \in BUC( {\mathbb{R}} ^e)$ and $f: [0,T] \times {\mathbb{R}}
^e \times U \to {\mathbb{R}} $ bounded, continuous and locally uniformly
continuous in $t,x$, uniformly in $u$. Then 
we have 
\begin{align*}
V(t,x) = \inf_{z \in \mathcal{Z}_ {\mathcal{F}} } {\mathbb{E}} \left[ \left.
\sup_{\mu \in \mathcal{M}} \left\{ \int_t^T f(r,X^{t,x,\mu,{\mathbf{\eta}}%
}_r,\mu_r) dr + g(X^{t,x,\mu,{\mathbf{\eta}}}_T) + z( {\mathbf{\eta}}, \mu )
\right\} \right\vert_{{\mathbf{\eta}} = \mathbf{B}(\omega)} \right].
\end{align*}
Where $\mathbf{B}$ denotes the Stratonovich lift of Brownian motion to a
geometric rough path and $X^{t,x,\mu,{\mathbf{\eta}}}$ is the solution to
the RDE with controlled drift (Theorem \ref{Thm:RDEWithControlledDrift}) 
\begin{equation*}
dX^{t,x,\mu,\mathbf{\eta}}=b\left( X^{t,x,\mu,\mathbf{\eta}},\mu\right)
dt+\sigma \left( X^{t,x,\mu,\mathbf{\eta}}\right) d\mathbf{\eta}, \quad
X^{t,x,\mu,\mathbf{\eta}}_t = x.
\end{equation*}
\end{theorem}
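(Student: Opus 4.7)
The plan is to prove the duality by establishing the two inequalities separately. The easy ``$\leq$'' direction exploits consistency between the rough-path and Stratonovich SDE solutions for adapted controls, together with the defining mean-nonnegativity of admissible penalties; the hard ``$\geq$'' direction constructs explicit near-optimal penalties from smooth supersolutions of the HJB equation \eqref{equ:HJB_semilinear} and then approximates the viscosity solution $V$ from above by such supersolutions.

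For the upper bound, fix $\nu \in \mathcal{A}$ and $z \in \mathcal{Z}_{\mathcal{F}}$. Theorem \ref{Thm:RDEWithControlledDrift} identifies the rough solution $X^{t,x,\nu,\mathbf{B}(\omega)}$ almost surely with the Stratonovich solution of \eqref{eq:controlledSDE}, while Proposition \ref{prop:measurable} (extended to include the continuous penalty $z$) ensures measurability of the pathwise supremum. Combining these with $\mathbb{E}[z(\mathbf{B},\nu)] \geq 0$ gives
\begin{equation*}
\mathbb{E}\!\left[\int_t^T f(r,X^\nu_r,\nu_r)\,dr + g(X^\nu_T)\right] \;\le\; \mathbb{E}\!\left[\sup_{\mu \in \mathcal{M}}\bigl\{J(\mu,\mathbf{B}) + z(\mathbf{B},\mu)\bigr\}\right],
\end{equation*}
and taking $\sup_{\nu}$ on the left and $\inf_{z}$ on the right yields $V(t,x) \leq \inf_z \mathbb{E}[\sup_\mu\{\cdots\}]$.

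For the lower bound, given any $\phi \in C^{1,2}([0,T]\times\mathbb{R}^e)$ with bounded derivatives satisfying $\phi(T,\cdot) \geq g$ and $\partial_s\phi(s,x) + L^u\phi(s,x) + f(s,x,u) \leq 0$ for every $(s,x,u)$ (a smooth classical supersolution of HJB that majorises $g$ at the terminal time), define the pathwise penalty
\begin{equation*}
z_\phi(\mathbf{\eta},\mu) := -\int_t^T \langle D\phi(s,X^{\mu,\mathbf{\eta}}_s),\, \sigma(X^{\mu,\mathbf{\eta}}_s)\rangle\, d\mathbf{\eta}_s + \int_t^T \Bigl[L\phi + \tfrac12 \sum_{i=1}^d \langle D\phi,\, (D\sigma_i)\sigma_i\rangle\Bigr](s,X^{\mu,\mathbf{\eta}}_s)\, ds,
\end{equation*}
where the first integral is a rough integral against $\mathbf{\eta}$ and the second is precisely the Stratonovich--It\^o correction. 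Specialising $\mathbf{\eta}=\mathbf{B}(\omega)$ with adapted $\mu=\nu$, $z_\phi(\mathbf{B},\nu)$ collapses to the negative It\^o integral $-\int_t^T \langle D\phi,\sigma\rangle\, dB$, which has zero mean; continuity in $\mathbf{\eta}$ uniform in $\mu$ is inherited from stability of RDE flows and rough integration under $\sigma_i \in \mathrm{Lip}^\gamma$ with $\gamma>2$. Hence $z_\phi \in \mathcal{Z}_{\mathcal{F}}$. The rough chain rule applied to $\phi(s,X^{\mu,\mathbf{\eta}}_s)$ then gives, pathwise for every $\mu \in \mathcal{M}$,
\begin{equation*}
J(\mu,\mathbf{\eta}) + z_\phi(\mathbf{\eta},\mu) = g(X^{\mu,\mathbf{\eta}}_T) - \phi(T,X^{\mu,\mathbf{\eta}}_T) + \phi(t,x) + \int_t^T \bigl[\partial_s\phi(s,X^{\mu,\mathbf{\eta}}_s) + L^{\mu_s}\phi(s,X^{\mu,\mathbf{\eta}}_s) + f(s,X^{\mu,\mathbf{\eta}}_s,\mu_s)\bigr]\, ds \leq \phi(t,x),
\end{equation*}
since the integrand is nonpositive and $g(X_T) - \phi(T,X_T) \leq 0$. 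Taking $\sup_\mu$ and expectation yields $\inf_z \mathbb{E}[\sup_\mu\{\cdots\}] \leq \phi(t,x)$.

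The proof closes by approximating $V$ from above by smooth supersolutions of the type used above: take a sup-convolution of $V$, mollify in space, then add $\varepsilon(T-t+1)$ to restore strict supersolution status and the terminal majorisation $\phi_\varepsilon(T,\cdot) \geq g$; letting $\varepsilon \downarrow 0$ drives $\phi_\varepsilon \to V$ locally uniformly. I expect this regularisation to be the main obstacle, since under the present hypotheses $V$ is only BUC and one must run the construction entirely inside the viscosity framework, preserving the supersolution inequality at each stage and keeping the derivatives of $\phi_\varepsilon$ globally bounded so that $z_{\phi_\varepsilon}$ remains admissible. A secondary technicality is the global boundedness requirement on $z \in \mathcal{Z}_{\mathcal{F}}$: the rough integral in $z_\phi$ is unbounded in $\|\mathbf{\eta}\|_{\alpha\text{-H\"ol}}$, so strict boundedness forces a truncation outside a large Hölder ball, with the resulting error absorbed via Gaussian rough-path tail bounds on $\|\mathbf{B}\|_{\alpha\text{-H\"ol}}$.
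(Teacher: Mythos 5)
Your argument is correct in outline, but the lower bound takes a genuinely different --- and much heavier --- route than the paper's. For Theorem \ref{thm:generalDuality} the paper closes the ``$\geq$'' direction in one line: it takes the penalty
\begin{equation*}
z^*(\mathbf{\eta},\mu) := V(t,x) - \Bigl(\int_t^T f(s,X^{t,x,\mu,\mathbf{\eta}}_s,\mu_s)\,ds + g(X^{t,x,\mu,\mathbf{\eta}}_T)\Bigr),
\end{equation*}
which is bounded (as $f,g$ are bounded), continuous in $\mathbf{\eta}$ uniformly in $\mu$ by Theorem \ref{Thm:RDEWithControlledDrift}, and satisfies $\mathbb{E}[z^*(\mathbf{B},\nu)] = V(t,x) - \mathbb{E}[J(\nu)] \ge 0$ for adapted $\nu$ precisely because $V$ is the supremum over adapted controls; with this $z^*$ the inner supremum is identically $V(t,x)$, so the infimum is attained. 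Because the class $\mathcal{Z}_{\mathcal{F}}$ is so large, no PDE machinery is needed. What you construct instead --- martingale-increment penalties $z_\phi = -M^{t,x,\mu,\mathbf{\eta},\phi}_{t,T}$ built from smooth supersolutions $\phi$ of the HJB equation, plus an approximation of the BUC viscosity solution $V$ from above by such $\phi$ --- is essentially the paper's proof of the sharper Theorem \ref{thm:dualityUsingVF}, where the penalty class is restricted to exactly these $z_\phi$ and the regularisation is carried out via Krylov's ``shaking the coefficients'' (solve the shaken equation to get $V^\varepsilon$, mollify, use convexity of $F$ in $(DV,D^2V)$ to preserve the supersolution property). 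Your route therefore proves more than is asked, at the cost of making the regularisation step --- which you correctly identify as the crux, and which for $x$-dependent second-order coefficients really does require the shaking argument rather than a bare sup-convolution --- load-bearing for a statement that does not need it. Two smaller points: your worry about unboundedness of $z_\phi$ in $\|\mathbf{\eta}\|_{\alpha\text{-H\"ol}}$ is unfounded, since by the rough chain rule $z_\phi$ equals $-\phi(T,X_T)+\phi(t,x)+\int_t^T(\partial_s+L^{\mu_s})\phi\,ds$, which is manifestly bounded for $\phi\in C^{1,2}_b$, so no truncation or Gaussian tail estimate is needed; and your ``$\leq$'' direction coincides with the paper's.
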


\begin{remark}
Every choice of admissible control $\nu\in\mathcal{A}$ in (\ref{defV}) leads
to a lower bound on the value function (with equality for~$\nu=\nu^{\ast},$
the optimal control). In the same spirit, every choice $z$ leads to an upper
bound. 
There is great interest in such duality results, as they help to judge how
much room is left for policy improvement. The result is still too general
for this purpose and therefore it is an important question, discussed below,
to understand whether duality still holds when restricting to some concrete
(parametrized) subsets of $\mathcal{Z}_ {\mathcal{F}} $.
\end{remark}

\begin{proof}
We first note, that the supremum inside the expectation is continuous (and
hence measurable) in ${\mathbf{\eta}}$, which follows by the same argument
as in the proof of Theorem \ref{thm:roughHJ}. Since it is also bounded, the
expectation is well-defined.

Recall that $X^{t,x,\nu}$ is the solution to the (classical) controlled SDE
and that $X^{t,x,\mu,{\mathbf{\eta}}}$ is the solution to the controlled
RDE. Let $z \in \mathcal{Z}_ {\mathcal{F}} $. Then, using Theorem \ref%
{Thm:RDEWithControlledDrift} to justify the step from second to third line, 
\begin{align*}
V(t,x) &= \sup_{\nu \in \mathcal{A}} {\mathbb{E}} \left[ \int_t^T f(s,
X^{t,x,\nu}_s, \nu_s) ds + g(X^{t,x,u}_T) \right] \\
&\le \sup_{\nu \in \mathcal{A}} {\mathbb{E}} \left[ \int_t^T f(s,
X^{t,x,\nu}_s, \nu_s) ds + g(X^{t,x,\nu}_T) + z(\mathbf{B}, \nu) \right] \\
&= \sup_{\nu \in \mathcal{A}} {\mathbb{E}} \left[ \left\{ \int_t^T f(s,
X^{t,x,\mu,{\mathbf{\eta}}}_s, \mu_s) ds + g(X^{t,x,\mu,{\mathbf{\eta}}}_T)
+ z({\mathbf{\eta}}, \mu) \right\}_{\mu=\nu, {\mathbf{\eta}}=\mathbf{B}} %
\right] \\
&\le {\mathbb{E}} \left[ \sup_{\mu\in\mathcal{M}} \left\{ \int_t^T f(s,
X^{t,x,\mu,{\mathbf{\eta}}}_s, \mu_s) ds + g(X^{t,x,\mu,{\mathbf{\eta}}}_T)
+ z({\mathbf{\eta}}, \mu) \right\}_{{\mathbf{\eta}}=\mathbf{B}} \right],
\end{align*}

And to show equality, let 
\begin{align*}
z^*({\mathbf{\eta}},\mu) := V(t,x) - \int_t^T f(s, X^{t,x,\mu,{\mathbf{\eta}}%
}_s, \mu_s) ds + g(X^{t,x,\mu,{\mathbf{\eta}}}_T).
\end{align*}
Then $z^* \in \mathcal{Z}_ {\mathcal{F}} $ and equality is attained.
\end{proof}

%

\subsection{Example I, inspired by the discrete-time results of Rogers 
\protect\cite{R07}}

We now show that Theorem \ref{thm:generalDuality} still holds with penalty
terms based on martingale increments.

\begin{theorem}
\label{thm:dualityUsingVF} Under the (regularity) assumptions of Theorem \ref%
{thm:generalDuality} we have 
\begin{align*}
V\left( t,x\right) &=\inf_{h\in C^{1,2}_b}\mathbb{E}\left[ \left. \sup_{\mu
\in \mathcal{M} }\left\{ \int_{t}^{T}f\left( s,X^{t,x,\mu,\mathbf{\eta }%
},\mu_{s}\right) ds+g\left( X_{T}^{t,x,\mu,\mathbf{\eta }}\right)
-M_{t,T}^{t,x,\mu,\mathbf{\eta},h}\right\} \right\vert _{\mathbf{\eta =B}%
\left( \omega \right) }\right],
\end{align*}
where 
\begin{equation*}
M_{t,T}^{t,x,\mu,{\mathbf{\eta}},h} := h\left( T,X_{T}^{t,x,\mu,{\mathbf{\eta%
}} }\right) -h\left( t, X_{t}^{t,x,\mu,{\mathbf{\eta}}} \right)
-\int_{t}^{T}\left( \partial _{s}+L^{\mu_s}\right) h\left(s,X_{s}^{t,x,\mu,{%
\mathbf{\eta}} }\right) ds.
\end{equation*}
That is, Theorem \ref{thm:generalDuality} still holds with $\mathcal{Z}_ {%
\mathcal{F}} $ replaced by the set $\{ z: z({\mathbf{\eta}}, \mu) =
M_{t,T}^{t,x,\mu,{\mathbf{\eta}},h}, h \in C^{1,2}_b \}$. Moreover, if $V$ $%
\in$ $C^{1,2}_b$ the infimum is achieved at $h^{\ast}=V$.
\end{theorem}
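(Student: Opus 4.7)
The plan is to leverage Theorem \ref{thm:generalDuality}. Concretely, I would set $z_h(\eta, \mu) := -M^{t,x,\mu,\eta,h}_{t,T}$ and show that the subfamily $\mathcal{Z}_h := \{z_h : h \in C^{1,2}_b\}$ is contained in $\mathcal{Z}_{\mathcal{F}}$ --- which yields $V(t,x) \leq \inf_{h \in C^{1,2}_b} \mathbb{E}[\dots]$ for free --- and then, for the reverse inequality together with the optimality claim, take $h = V$ and read the HJB equation \eqref{equ:HJB_semilinear} pointwise.

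The inclusion $\mathcal{Z}_h \subset \mathcal{Z}_{\mathcal{F}}$ breaks into three standard checks. Boundedness of $z_h$ is inherited from $h \in C^{1,2}_b$ together with the boundedness of $b$ (from $b(\cdot, u) \in \Lip^1$ uniformly in $u$) and of $\sigma$ (from $\sigma_i \in \Lip^\gamma$ with $\gamma > 2$), which makes $(\partial_s + L^u)h$ bounded uniformly in $(s,x,u)$. Continuity in $\eta$ uniform in $\mu$ is inherited from the uniform continuity of the flow map $\eta \mapsto X^{t,x,\mu,\eta}$ supplied by Theorem \ref{Thm:RDEWithControlledDrift}, composed with the $C^{1,2}$ smoothness of $h$. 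The decisive point is the nonnegativity condition: when $\nu$ is adapted and $\eta$ is replaced by the Stratonovich lift $\mathbf{B}(\omega)$, the rough-path solution coincides with the classical Stratonovich SDE solution \eqref{eq:controlledSDE}, and because $L^u = \tilde{b}(\cdot, u) D + L$ is precisely the It\^o generator of this SDE (Stratonovich correction absorbed into $\tilde{b}$), It\^o's formula yields
\begin{equation*}
M^{t,x,\nu,\mathbf{B},h}_{t,T} = \sum_{i=1}^d \int_t^T \langle Dh(s, X^{t,x,\nu}_s),\, \sigma_i(X^{t,x,\nu}_s)\rangle\, dB^i_s,
\end{equation*}
a true martingale (integrands bounded). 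Hence $\mathbb{E}[M] = 0$ and $\mathbb{E}[z_h(\mathbf{B}, \nu)] = 0 \geq 0$, as required, and Theorem \ref{thm:generalDuality} delivers $V(t,x) \leq \inf_{h \in C^{1,2}_b} \mathbb{E}[\dots]$.

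For the matching inequality $\inf_h \leq V(t,x)$ under the assumption $V \in C^{1,2}_b$, take $h = V$. Reading \eqref{equ:HJB_semilinear} pointwise yields the supersolution bound $(\partial_s + L^u) V(s,x) + f(s,x,u) \leq 0$ for every $u \in U$, together with $V(T, \cdot) = g$. Substituting into the definition of $M^V$ and rearranging gives, for any rough path $\eta$ and any $\mu \in \mathcal{M}$, abbreviating $X_r := X^{t,x,\mu,\eta}_r$,
\begin{align*}
\int_t^T f(r, X_r, \mu_r)\, dr + g(X_T) - M^{t,x,\mu,\eta,V}_{t,T}
&= V(t,x) + [g(X_T) - V(T, X_T)] \\
&\quad + \int_t^T \bigl\{(\partial_r + L^{\mu_r}) V(r, X_r) + f(r, X_r, \mu_r)\bigr\}\, dr \\
&\leq V(t,x).
\end{align*}
Taking $\sup_\mu$ and expectation therefore matches the upper bound from Step 1, proving both the equality and the optimality of $h^* = V$.

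The main obstacle is dropping the smoothness assumption on $V$ used in the last step. The pointwise calculation delivers $\inf_h \leq V$ only when $V$ itself lies in $C^{1,2}_b$; in general one must approximate $V$ by a sequence of smooth ``near-supersolutions'' $h^n \in C^{1,2}_b$ satisfying $(\partial_s + L^u) h^n + f \leq \varepsilon_n$ and $h^n(T, \cdot) \geq g - \varepsilon_n$ with $\varepsilon_n \downarrow 0$, then repeat the calculation and pass to the limit. Producing such an approximating sequence --- for instance via sup/inf-convolutions of the viscosity solution $V$, or by smoothing the coefficients and invoking Krylov-type regularity for the approximating semilinear HJB equations --- is the genuinely delicate step behind the unqualified equality statement.
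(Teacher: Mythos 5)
Your argument follows the same route as the paper for everything it actually completes: the inclusion of the martingale penalties into $\mathcal{Z}_{\mathcal{F}}$ (boundedness, continuity in $\mathbf{\eta}$ uniformly in $\mu$ via Theorem \ref{Thm:RDEWithControlledDrift}, and $\mathbb{E}\bigl[M^{t,x,\nu,\mathbf{B},h}_{t,T}\bigr]=0$ for adapted $\nu$ by consistency of the RDE with the Stratonovich SDE plus It\^o's formula) is exactly how the upper bound $V\le\inf_h\mathbb{E}[\dots]$ is obtained there, and your pointwise computation with $h=V\in C^{1,2}_b$ is the verification step that gives both the reverse inequality and the optimality of $h^*=V$ in the smooth case.

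The gap is the one you name yourself, and it is a genuine one: the theorem asserts the equality under only the regularity assumptions of Theorem \ref{thm:generalDuality}, which do not make $V$ smooth. Your computation shows that any smooth supersolution $h\in C^{1,2}_b$ of the HJB equation with $h(T,\cdot)\ge g$ yields $\mathbb{E}[\sup_\mu\{\dots\}]\le h(t,x)$, so the whole theorem reduces to the claim that the BUC viscosity solution $V$ is the pointwise infimum of such smooth supersolutions. This is not automatic --- naive mollification of $V$ need not produce a supersolution --- and the paper closes it with Krylov's ``shaking the coefficients'': perturb $(b,\sigma,f)$ by shifts $|s|,|e|\le\varepsilon$ inside the sup defining the Hamiltonian, let $V^\varepsilon$ be the value function of the shaken problem (so that every shift $V^\varepsilon(\cdot-s,\cdot-e)$ is a supersolution of the \emph{original} equation), and use the convexity of $F$ in $(DV,D^2V)$ to conclude that the mollification $V_\varepsilon=V^\varepsilon\ast\rho_\varepsilon$ is a smooth supersolution, with $V_\varepsilon\to V$ locally uniformly. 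Your suggested alternatives (sup-convolutions, or smoothing the coefficients and invoking regularity of the approximating problems) point in the right direction, but whichever route you take you still need the convexity of the Hamiltonian to turn an average of supersolutions into a supersolution; until that construction is carried out, the unconditional equality in the statement is not proved.
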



\begin{proof}
We have 
\begin{align*}
&V(t,x) \\
&\le \inf_{h \in C^{1,2}_b} {\mathbb{E}} \left[ \left. \sup_{\mu \in 
\mathcal{M}} \left\{ \int_t^T f(s, X^{t,x,\mu,{\mathbf{\eta}}}_s, \mu_s) ds
+ g( X^{t,x,\mu,{\mathbf{\eta}}}_T ) - M_{t,T}^{t,x,\mu,{\mathbf{\eta}},h}
\right\} \right\vert_{{\mathbf{\eta}}=\mathbf{B}} \right] \\
&= \inf_{h \in C^{1,2}_b} \Bigl( h(t,x) \\
&\qquad + {\mathbb{E}} \left[ \left. \sup_{\mu \in \mathcal{M}} \left\{
\int_t^T f(s, X^{t,x,\mu,{\mathbf{\eta}}}_s, \mu_s) + (\partial_s +
L^{\mu_s}) h(s, X^{t,x,\mu,{\mathbf{\eta}}}_s) ds + g( X^{t,x,\mu,{\mathbf{%
\eta}}}_T ) - h(T, X^{t,x,\mu,{\mathbf{\eta}}}_T ) \right\} \right\vert_{{%
\mathbf{\eta}}=\mathbf{B}} \right] \Bigr) \\
&\le \inf_{h \in C^{1,2}_b} \left( h(t,x) + \int_t^T \sup_{x \in {\mathbb{R}}
^e, u \in U} \left[ f(s, x, u) + (\partial_s + L^{u}) h(s, x) \right] ds +
\sup_{x \in {\mathbb{R}} ^e} \left[ g( x ) - h(T, x ) \right] \right) \\
&\le \inf_{h \in S^{+}_{s}} \left( h(t,x) + \int_t^T \sup_{x \in {\mathbb{R}}
^e, u \in U} \left[ f(s, x, u) + (\partial_s + L^{u}) h(s, x) \right] ds +
\sup_{x \in {\mathbb{R}} ^e} \left[ g( x ) - h(T, x ) \right] \right) \\
&\le \inf_{h \in S^{+}_s} h(t,x).
\end{align*}
where the first inequality follows from (the proof of) Theorem \ref%
{thm:generalDuality} and $S^{+}_s$ denotes the class of smooth super
solutions of the HJB equation.

But in fact the infimum of smooth supersolutions is equal to the viscosity
solution $V$, all inequalities are actually equalities and the result
follows. This can be proved via a technique due to Krylov \cite{Kr00} which
he called "shaking the coefficients". For the reader's convenience let us
recall the argument.

Extending by continuity $\tilde b, \sigma$ and $f$ to $t$ $\in$ $(-\infty,
\infty)$, define for $\varepsilon$ $>$ $0$, 
\begin{eqnarray*}
F^\varepsilon(t,x,p,X) &:=& \sup_{u \in U, |s|, |e| \leq \varepsilon } \left[
\tilde \langle b(x+e,u), p \rangle + \frac{1}{2} Tr( \sigma \sigma^T(x+e) X)
+ f(t+s,x+e,u) \right],
\end{eqnarray*}
and consider $V^\varepsilon$ the unique viscosity solution to 
\begin{eqnarray*}
\left\{ 
\begin{array}{ccc}
- \frac{\partial V^\varepsilon}{\partial t} -
F^\varepsilon(t,x,DV^\varepsilon, D^2V^\varepsilon) & = & 0, \\ 
V^\varepsilon(T,\cdot) & = & g.%
\end{array}
\right.
\end{eqnarray*}

By (local) uniform continuity of $b$, $\sigma$, $f$ one can actually show
that $V \to V^\varepsilon$ as $\varepsilon$ $\to$ $0$, locally uniformly.
This can be done for instance by interpreting $V^\varepsilon$ as the value
function of a stochastic control problem.

Now take some smoothing kernel $\rho_\varepsilon$ with $\int_{ {\mathbb{R}}
^{e+1}} \rho_\varepsilon = 1$ and $\func{supp}(\rho_\varepsilon) \subset
[-\varepsilon, \varepsilon]^{e+1}$, and define $V_\varepsilon :=
V^\varepsilon \ast \rho_\varepsilon$. Clearly by definition of $%
F^\varepsilon $, for each $|s|, |e| \leq \varepsilon$, $V^\varepsilon( \cdot
-s, \cdot - e) $ is a supersolution to the HJB equation $-\partial_t V -
F(t,x,DV,D^2V) = 0$. Since $F$ is convex in $(DV,D^2V)$ it follows that 
\begin{eqnarray*}
V_\varepsilon &=& \int_{[-\varepsilon, \varepsilon]^{e+1}}
V^\varepsilon(\cdot-s, \cdot - e) \rho_\varepsilon(s,e) ds de
\end{eqnarray*}
is again a (smooth) supersolution (for the details see the appendix in \cite%
{BJ}). Finally it only remains to notice that $| V - V_\varepsilon| $ $\leq$ 
$| V - V \ast \rho_\varepsilon | + |(V - V^\varepsilon) \ast
\rho_\varepsilon|$ $\to$ $0$ (locally uniformly).
\end{proof}

\begin{remark}
Note that 
\begin{equation*}
V^h(t,x) := \mathbb{E}\left[ \left. \sup_{\mu \in \mathcal{M} }\left\{
\int_{t}^{T}f\left( s,X^{t,x,\mu,\mathbf{\eta }},\mu_{s}\right) ds+g\left(
X_{T}^{t,x,\mu,\mathbf{\eta }}\right) -M_{t,T}^{t,x,\mu,\mathbf{\eta}%
,h}\right\} \right\vert _{\mathbf{\eta =B}\left( \omega \right) }\right],
\end{equation*}
\underline{for fixed $x,t$}, is precisely of the form \ref{equ:Vbar} with $f$
resp. $g$ replaced by $\tilde{f}$ resp. $\tilde{g}$, given by%
\begin{eqnarray*}
\tilde{f}\left( s,\cdot ,\mu\right) &=&f\left( s,\cdot ,\mu\right) +\left(
\partial _{s}+L^{\mu}\right) h\left( s,\cdot \right) , \\
\tilde{g}\left( \cdot \right) &=&g\left( \cdot \right) +h\left( T,\cdot
\right) -h\left( t,x\right) .
\end{eqnarray*}%
%
%
The point is that the inner pathwise optimization falls directly into the
framework of Section \ref{sec:deterministicControl}.
\end{remark}

\begin{remark}
\label{rem:RogersInt} For ${\mathbf{\eta }}$ a geometric rough path, we may
apply the chain rule to $h(s,X_{s})$ and obtain 
\begin{equation*}
h(T,X_{T})-h(t,x)=\int_{t}^{T}\langle Dh(s,X_{s}),b(s,X_{s},\mu
_{s})ds+\sigma (X_{s})d{\mathbf{\eta }}_{s}\rangle .
\end{equation*}%
It follows that the penalization may also be rewritten in a (rough) integral
form 
\begin{equation*}
M_{t,T}^{t,x,\mu ,\mathbf{\eta },h}=\;\int_{t}^{T}\langle Dh(s,X_{s}),\sigma
(X_{s})d{\mathbf{\eta }}_{s}\rangle +\int_{t}^{T}\left\{ \langle (b-\tilde{b}%
)(s,X_{s},\mu _{s}),Dh(s,X_{s})\rangle +\func{Tr}[(\sigma \sigma
^{T})D^{2}h](s,X_{s})\right\} ds.
\end{equation*}%
Note that for ${\mathbf{\eta }}=\mathbf{B}$ and adapted $\nu $, this is just
the It\^{o} integral $\int_{t}^{T}\langle Dh(s,X_{s}),\sigma
(X_{s})dB_{s}\rangle $.
\end{remark}

\begin{remark}
If one were to try anticipating stochastic calculus, in the spirit \cite{DB}%
, to implement Roger's duality in continuous time, then - leaving aside all
other technical (measurability) issues that have to be dealt with - more
regularity on the coefficient will be required. This is in stark contrast to
the usual understanding in SDE theory that rough paths require more
regularity than It\^{o} theory.
\end{remark}

\begin{example}
From example \ref{ex:additive} we can see that in some special cases this
method gives explicit upper bounds. Assume :

\begin{itemize}
\item additive noise ($\sigma \equiv Id$),

\item state-independent drift $b \equiv b(u)$,

\item running gain $f(s,x,u) = f^0(u) + \nabla h(x) \cdot b(u)$, with $h$
subharmonic ($\Delta h \geq 0$).
\end{itemize}

Then for the penalty corresponding to $h(t,x)=h(x)$, the inner optimization
problem is given by 
\begin{align*}
& \sup_{\mu \in \mathcal{M} }\left\{ \int_{t}^{T}(f^0\left(\mu_{s}\right) +
\langle \nabla h(X_{s}^{t,x,\mu,\mathbf{\eta } }), b(\mu_s) \rangle
)ds+g\left( X_{T}^{t,x,\mu,\mathbf{\eta }}\right) -\int_t^T(\langle \nabla
h(X_{s}^{t,x,\mu,\mathbf{\eta }}), b(\mu_s) \rangle + \frac{1}{2} \Delta
h(X_{s}^{t,x,\mu,\mathbf{\eta }}))ds \right\} \\
&\leq \sup_{\mu \in \mathcal{M} }\left\{
\int_{t}^{T}f^0\left(\mu_{s}\right)ds+(g-h)\left( X_{T}^{t,x,\mu,\mathbf{%
\eta }}\right) \right\} \;\;\;= \;\;\; V^{0,h}(t, x + \eta_T - \eta_t),
\end{align*}
where $V^{0,h}$ is the value function to the standard control problem 
\begin{align*}
V^0(t,x) &= \sup_{\mu \in \mathcal{M}} \left\{ \int_t^T f^0(\mu_s) ds +
(g-h)\left( x + \int_t^T \mu_s ds \right) \right\}.
\end{align*}
From Theorem \ref{thm:dualityUsingVF}, we then have the upper bound 
\begin{align*}
V(t,x) &\leq h(x) + {\mathbb{E}} \left[V^{0,h} \left(t, x + B_T -B_t \right) %
\right].
\end{align*}
\end{example}

\begin{remark}
As in Remark \ref{rem:uinsig}, one can wonder how Theorem \ref%
{thm:dualityUsingVF} could translate in the case where $\sigma$ depends on $%
u $. As mentioned in that remark, under reasonable conditions on $\sigma$
the control problem degenerates so that \emph{for any choice of $h$}, say
for piecewise-constant controls $\mu$, we can expect that 
\begin{align*}
& {\mathbb{E}} \left[ \left. \sup_{\mu} \left\{ \int_t^T f(s, X^{t,x,\mu,{%
\mathbf{\eta}}}_s, \mu_s) + (\partial_t + L^{\mu_s}) h(s, X^{t,x,\mu,{%
\mathbf{\eta}}}_s) ds + g( X^{t,x,\mu,{\mathbf{\eta}}}_T ) - h(T, X^{t,x,\mu,%
{\mathbf{\eta}}}_T ) \right\} \right\vert_{{\mathbf{\eta}}=\mathbf{B}} %
\right] \Bigr) \\
&= \int_t^T \sup_{x \in {\mathbb{R}} ^e, u \in U} \left[ f(s, x, u) +
(\partial_t + L^{u}) h(s, x) \right] ds + \sup_{x \in {\mathbb{R}} ^e} \left[
g( x ) - h(T, x ) \right].
\end{align*}

In other words there is nothing to be gained from considering the
(penalized) pathwise optimization problem, as we always get 
\begin{align*}
V(t,x) &\leq h(t,x) + \int_t^T \sup_{x \in {\mathbb{R}} ^e, u \in U} \left[
f(s, x, u) + (\partial_t + L^{u}) h(s, x) \right] ds + \sup_{x \in {\mathbb{R%
}} ^e} \left[ g( x ) - h(T, x ) \right]
\end{align*}
which is in fact clear from a direct application of It\^o's formula (or
viscosity comparison). 
\end{remark}

\subsection{Example II, inspired by Davis--Burstein \protect\cite{DB}}

Under certain concavity assumptions, it turns out that linear penalization
is enough.

\begin{theorem}
\label{thm:dbDuality} 
Let $g$ be as in Theorem \ref{thm:generalDuality} and assume $f=0$;
furthermore make the (stronger) assumption that $b \in C^5_b$, $\sigma \in
C^5_b$, 
$\sigma \sigma^T > 0$, and that \eqref{defV} has a feedback solution $u^*$
which is continuous, $C^1$ in $t$ and $C^4_b$ in $x$, taking values in the
interior of $U$. Assume that $U$ is a compact convex subset of ${\mathbb{R}}
^n$,

Let $Z^{t,x,{\mathbf{\eta}}}$ be the solution starting from $x$ at time $t$
to : 
\begin{eqnarray}  \label{eq:dynZ}
dZ &=& b\left( Z,u^*(t,Z)\right) dt+\sigma \left( Z\right) d{\mathbf{\eta}}
- b_u(Z,u^*(t,Z)) u^*(t,Z) dt,
\end{eqnarray}
let $W(t,x):=W(t,x;{\mathbf{\eta}}):=g(Z_T^{t,x,{\mathbf{\eta}}})$ and
assume that 
\begin{eqnarray}
\forall (t,x), \;\;\; u \mapsto \left\langle b(x,u), DW(t,x;\mathbf{B})
\right\rangle \;\;\;\; \mbox{ is strictly concave, a.s.}  \label{eq:convDB}
\end{eqnarray}
Then 
\begin{align*}
V(t,x) &= \inf_{\lambda \in A} {\mathbb{E}} [ \left. \sup_{\mu \in \mathcal{M%
}} \left\{ \int_t^T f(r,X^{t,x,\mu,{\mathbf{\eta}}}_r,\mu_r) dr +
g(X^{t,x,\mu,{\mathbf{\eta}}}_T) + \int_t^T \langle \lambda(r,X^{t,x,\mu,{%
\mathbf{\eta}}}_r, {\mathbf{\eta}}), \mu_r \rangle dr \right\} \right\vert_{{%
\mathbf{\eta}} = \mathbf{B}(\omega)} ].
\end{align*}

Where 
$A$ is the class of all $\lambda: [0,T] \times {\mathbb{R}} ^e \times 
\mathcal{C}^{0,\alpha} \to {\mathbb{R}} ^d$ such that

\begin{itemize}
\item $\lambda$ is bounded and uniformly continuous on bounded sets 

\item $\lambda$ is future adapted, i.e. for any fixed $t,x$, $\lambda(t,x, 
\mathbf{B} ) \in {\mathcal{F}} _{t,T}$

\item ${\mathbb{E}} [ \lambda(t,x,\mathbf{B}) ] = 0$ for all $t,x$.
\end{itemize}

That is, Theorem \ref{thm:generalDuality} still holds with $\mathcal{Z}_ {%
\mathcal{F}} $ replaced by the set 
\begin{align*}
\{ z: z({\mathbf{\eta}}, \mu) = \int_t^T \langle \lambda(s,X^{t,x,\mu,{%
\mathbf{\eta}}}_r,{\mathbf{\eta}}), \mu_s \rangle ds, \lambda \in A \}.
\end{align*}
Moreover the infimum is achieved with $\lambda^*(t,x,{\mathbf{\eta}}) :=
b_u^T(t, u^*(t,x)) DW(t,x;{\mathbf{\eta}})$.
\end{theorem}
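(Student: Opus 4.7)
For any $\lambda \in A$ I would apply Theorem \ref{thm:generalDuality} with the penalty $z(\eta,\mu) := \int_t^T \langle \lambda(r,X^{t,x,\mu,\eta}_r,\eta),\mu_r \rangle\,dr$. Boundedness and measurability of $z$ are immediate; continuity in $\eta$ uniformly in $\mu$ follows from uniform continuity of $\lambda$ on bounded sets and continuity of the controlled It\^{o}--Lyons map (Theorem \ref{Thm:RDEWithControlledDrift}). The crucial identity $\mathbb{E}[z(\mathbf{B},\nu)] = 0$ for adapted $\nu$ follows by Fubini and conditioning on $\mathcal{F}_s$: since $\lambda(s,y,\mathbf{B}) \in \mathcal{F}_{s,T}$ is independent of the $\mathcal{F}_s$-measurable pair $(X^\nu_s,\nu_s)$, one has $\mathbb{E}[\langle \lambda(s,X^\nu_s,\mathbf{B}),\nu_s\rangle \mid \mathcal{F}_s] = \langle \mathbb{E}[\lambda(s,y,\mathbf{B})]|_{y=X^\nu_s},\nu_s \rangle = 0$. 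Thus $z \in \mathcal{Z}_{\mathcal{F}}$, and Theorem \ref{thm:generalDuality} yields $V(t,x) \leq \mathbb{E}[\sup_\mu\{g(X^{t,x,\mu,\mathbf{B}}_T) + \int_t^T \langle \lambda,\mu_r\rangle\,dr\}]$.

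\textbf{Pathwise identification.} For the reverse inequality at $\lambda = \lambda^*$ I would establish the pathwise identity $\sup_\mu J_{\lambda^*,\eta}[\mu] = W(t,x,\eta)$ for every geometric rough $\eta$, where $J_{\lambda^*,\eta}[\mu] := g(X^{t,x,\mu,\eta}_T) + \int_t^T \langle \lambda^*(r,X^{t,x,\mu,\eta}_r,\eta),\mu_r\rangle\,dr$. Since $W(t,x,\eta) = g(Z^{t,x,\eta}_T)$ is the pay-off of the uncontrolled rough flow $Z$ with drift $\bar b(t,x) := b(x,u^*(t,x)) - b_u(x,u^*(t,x))u^*(t,x)$, a backward Kolmogorov argument along smooth approximations $\eta^n \to \eta$ shows (in the sense of Definition \ref{defRoughPDE}) that
\begin{equation*}
-dW - \langle \bar b(t,x),DW \rangle\,dt - \langle \sigma(x),DW \rangle\,d\eta = 0,\qquad W(T,x)=g(x).
\end{equation*}
For each fixed $\eta$, $\lambda^*(\cdot,\cdot,\eta)$ is a smooth $(t,x)$-running gain (regularity inherited from the $C^5$-bounds on $b,\sigma$, smoothness of $u^*$, and smoothness of $W$ in $(t,x)$ via differentiation of the It\^{o}--Lyons map), so an adapted version of Theorem \ref{thm:roughHJ} makes $\tilde V(t,x,\eta) := \sup_\mu J_{\lambda^*,\eta}[\mu]$ the unique rough-viscosity solution to
\begin{equation*}
-d\tilde V - \sup_{u\in U}\{\langle D\tilde V,b(x,u)\rangle + \langle \lambda^*(t,x,\eta),u\rangle\}\,dt - \langle \sigma(x),D\tilde V\rangle\,d\eta = 0.
\end{equation*}
The key algebraic point is that $\lambda^*$ is engineered so that, at $p = DW$, the candidate $u = u^*(t,x)$ satisfies the first-order condition for the inner $u$-maximisation; strict concavity from Assumption \ref{eq:convDB} then promotes $u^*$ to the unique global maximiser, and the maximised value collapses to $\langle b(x,u^*) - b_u(x,u^*)u^*, DW\rangle = \langle \bar b, DW\rangle$. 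Thus $\tilde V$ and $W$ solve the same rough PDE and, by the uniqueness built into Definition \ref{defRoughPDE}, $\tilde V = W$.

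\textbf{Expected value identity and main obstacle.} It remains to verify $\mathbb{E}[W(t,x,\mathbf{B})] = V(t,x)$. Setting $V_0(t,x) := \mathbb{E}[g(Z^{t,x,\mathbf{B}}_T)]$, a Stratonovich Feynman--Kac argument yields
\begin{equation*}
\partial_t V_0 + \langle \bar b + c,DV_0\rangle + \tfrac{1}{2}\mathrm{Tr}[\sigma\sigma^T D^2 V_0] = 0,\qquad V_0(T,\cdot)=g,
\end{equation*}
with $c := \tfrac{1}{2}\sum_i \sigma_i \cdot D\sigma_i$ the It\^{o}--Stratonovich corrector. The HJB for $V$ evaluated at the optimal feedback reads $\partial_t V + \langle b(\cdot,u^*)+c,DV\rangle + \tfrac{1}{2}\mathrm{Tr}[\sigma\sigma^T D^2V]=0$, while interior first-order optimality gives $b_u(x,u^*)^T DV \equiv 0$, hence $\langle b_u(x,u^*)u^*, DV \rangle \equiv 0$. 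Adding this zero term to $V$'s equation produces exactly the PDE for $V_0$, and standard uniqueness yields $V \equiv V_0$. This incidentally confirms $\mathbb{E}[\lambda^*(t,x,\mathbf{B})] = b_u(x,u^*)^T \mathbb{E}[DW] = b_u(x,u^*)^T DV = 0$, so $\lambda^* \in A$; combined with the upper bound this closes the argument. The main obstacle I anticipate is the rough-PDE step in the second paragraph: the $\eta$-dependence of the running gain $\langle \lambda^*(t,x,\eta),u\rangle$ is not literally covered by Theorem \ref{thm:roughHJ}, and requires a careful approximation along smooth $\eta^n \to \eta$ exploiting $(t,x)$-smoothness of $W$; the strict concavity of Assumption \ref{eq:convDB} is precisely what makes the pointwise Hamiltonian maximisation unambiguous enough to survive this limit.
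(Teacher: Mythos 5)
Your proposal is correct, and its first two steps coincide with the paper's own argument: the upper bound is exactly the specialization of Theorem \ref{thm:generalDuality} (with the zero-mean check for adapted $\nu$ done by conditioning on $\mathcal{F}_s$ and exploiting future-adaptedness, as you do), and the pathwise identification $\sup_\mu J_{\lambda^*,\eta}=W(t,x;\eta)$ is precisely the paper's verification step, where the concavity hypothesis \eqref{eq:convDB} turns the rough transport equation \eqref{eq:pdeWZ} satisfied by $W$ into the rough HJB equation of the penalized problem with maximizer $u^*$. (Both you and the paper leave the rough-PDE technicalities at the level of a sketch --- an $\eta$-dependent running gain is not literally covered by Theorem \ref{thm:roughHJ}, and the paper explicitly \emph{assumes} that $W$ solves \eqref{eq:pdeWZ} and is differentiable; you at least flag this gap. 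You also inherit the paper's sign ambiguity: the verification identity reads $W=\sup_\mu\bigl[g(X_T)-\int\langle\lambda^*,\mu\rangle\bigr]$, consistent with the stated optimizer only after replacing $\lambda^*$ by $-\lambda^*$, which is harmless since $A$ is symmetric.) Where you genuinely diverge is the closing step $\mathbb{E}[W(t,x;\mathbf{B})]=V(t,x)$. The paper evaluates the penalized value function along the optimal trajectory, $W(t,x;\mathbf{B})=g(X_T^{t,x,*})-\int_t^T\langle\lambda^*(s,X_s^{t,x,*};\mathbf{B}),u^*(s,X_s^{t,x,*})\rangle\,ds$, and annihilates the penalty in expectation using that $\lambda^*(s,y,\mathbf{B})\in\mathcal{F}_{s,T}$ is independent of the adapted pair $(X_s^{*},u^*(s,X_s^{*}))$ and has zero mean --- a fact it imports from Davis--Burstein. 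You instead apply Feynman--Kac to $V_0(t,x):=\mathbb{E}[g(Z_T^{t,x,\mathbf{B}})]$ and match the resulting linear parabolic PDE with the HJB equation \eqref{equ:HJB_semilinear} evaluated at $u^*$, using the interior first-order condition $b_u(x,u^*)^TDV=0$ to absorb the extra drift $-b_uu^*$. Your route buys a self-contained derivation of $\mathbb{E}[\lambda^*(t,x,\mathbf{B})]=0$ (via $\mathbb{E}[DW]=DV_0=DV$), which the paper only cites; the price is needing $V$ to be a classical solution and an interchange of $D_x$ with $\mathbb{E}$, both legitimate under the stated $C^5_b$ and ellipticity hypotheses. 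Either route closes the argument at the same level of rigor as the paper's sketch.
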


\begin{remark}
The concavity assumption is difficult to verify for concrete examples. It
holds for the linear quadratic case, which we treat in Section \ref%
{ss:explicitComputations}.
\end{remark}

\begin{remark}
\label{rem:DBrunningcost} The case of running cost $f$ is, as usual, easily
covered with this formulation. Indeed, let the optimal control problem be
given as 
\begin{align*}
dX &= b(X,\nu) dt + \sigma(X) \circ dW, \\
V(t,x) &= \sup_\nu {\mathbb{E}} [ \int_t^T f(X,\nu) dr + g(X_T) ].
\end{align*}
Define the new component 
\begin{align*}
dX^{d+1}_t = f(X,u) dt, \qquad X^{d+1}_t = x.
\end{align*}
Then the theorem yields that the penalty 
\begin{align*}
\lambda^*(t,x) &:= (b_u, f_u) \cdot (D_{x_{1\dots d}} g(Z_T) + D_{x_{1\dots
d}} Z^{e+1}_T, D_{x_{e+1}} Z^{e+1}_T) \\
&= (b_u, f_u) \cdot (D_{x_{1\dots d}} g(Z_T) + D_{x_{1\dots d}} Z^{e+1}_T,
1).
\end{align*}
is optimal, where 
\begin{align*}
dZ &= b(Z,u^*) dt + \sigma(Z) d{\mathbf{\eta}} - b_u(Z,u^*) u^* dt, \\
dZ^{e+1} &= f(Z,u^*) dt - f_u(Z,u^*) u^* dt.
\end{align*}
\end{remark}


\begin{proof}
From (the proof of) Theorem \ref{thm:generalDuality} we know $V(t,x) \le
\inf_{\lambda \in A} {\mathbb{E}} [ \dots ]$. The converse direction is
proven in \cite{DB} by using \footnote{%
The paper of Davis--Burstein predates rough path theory and their proof
relies on anticipating stochastic calculus.} 
\begin{align*}
\lambda^*(t,x,{\mathbf{\eta}}) = b_u^T(t, u^*(t,x)) DW(t,x).
\end{align*}
For the reader's convenience we provide a sketch of the argument below. 

\end{proof}


\begin{proof}[Sketch of the Davis-Burstein argument]
We have assumed that the optimal control for the stochastic problem %
\eqref{defV} is given in feedback form by $u^*(t,x)$. Write $X^{t,x,*} :=
X^{t,x,u^*}$.

Recall that $Z^{t,x,{\mathbf{\eta}}}$ is the solution starting from $x$ at
time $t$ to : 
\begin{eqnarray*}
dZ &=& b\left( Z,u^*(t,Z)\right) dt+\sigma \left( Z\right) d{\mathbf{\eta}}
- b_u(Z,u^*(t,Z)) u^*(t,Z) dt.
\end{eqnarray*}
Assume that $W(t,x):=W(t,x;{\mathbf{\eta}})=g(Z_T^{t,x,{\mathbf{\eta}}})$ is
a (viscosity) solution to the rough PDE 
\begin{eqnarray}
- \partial_t W - \left\langle b(x,u^*(t,x)) - b_u(x,u^*(t,x))u^*(t,x), DW
\right\rangle - \left\langle \sigma \left( x\right), DW\right\rangle \dot{{%
\mathbf{\eta}}} &=& 0,  \label{eq:pdeWZ}
\end{eqnarray}
and assume that $W$ is differentiable in $x$.

We assumed that 
\begin{eqnarray*}
\forall (t,x), \;\;\; u \mapsto \left\langle b(x,u), DW(t,x) \right\rangle
\;\;\;\; \mbox{ is strictly concave.}
\end{eqnarray*}
It then follows that 
\begin{eqnarray}
\left\langle b(x,u^*(t,x)) - b_u(x,u^*(t,x))u^*(t,x), DW \right\rangle &=&
\sup_{u \in U} \left\langle b(x,u) - b_u(x,u^*(t,x))u, DW \right\rangle.
\label{eq:supu*}
\end{eqnarray}
Because of \eqref{eq:supu*} the PDE \eqref{eq:pdeWZ} may be rewritten as 
\begin{eqnarray*}
&& - \partial_t W - \left\langle b(x,u^*(t,x)), DW \right\rangle -
\left\langle u^*(t,x), \lambda^*(t,x;{\mathbf{\eta}})\right\rangle -
\left\langle \sigma \left( x\right), DW\right\rangle \dot{{\mathbf{\eta}}} \\
&=& - \partial_t W - \sup_{u \in U} \left\{ \left\langle b(x,u), DW
\right\rangle - \left\langle u, \lambda^*(t,x;{\mathbf{\eta}})\right\rangle
\right\} - \left\langle \sigma \left( x\right), DW\right\rangle \dot{{%
\mathbf{\eta}}} \\
&=& 0.
\end{eqnarray*}
By verification it follows that actually $W$ is also the value function of
the problem with penalty $\lambda^*$, and the optimal control is given by $%
u^*$, i.e. 
\begin{eqnarray*}
W(t,x) &=& W(t,x;{\mathbf{\eta}}) = \sup_{\mu \in \mathcal{M}} \left[
g(X_T^{t,x,\mu,{\mathbf{\eta}}}) - \int_t^T \left\langle
\lambda^*(s,X_s^{t,x,\mu,{\mathbf{\eta}}};{\mathbf{\eta}}) , \mu_s
\right\rangle ds \right] \\
&=& g(X_T^{t,x,u^*,{\mathbf{\eta}}}) - \int_t^T \left\langle
\lambda^*(s,X_s^{t,x,u^*,{\mathbf{\eta}}};{\mathbf{\eta}}) ,
u^*(s,X_s^{t,x,u^*,{\mathbf{\eta}}}) \right\rangle ds \\
\end{eqnarray*}
Then, by Theorem \ref{Thm:RDEWithControlledDrift} we have (if the convexity
assumption \eqref{eq:convDB} is satisfied a.s. by ${\mathbf{\eta}}=\mathbf{B}%
(\omega)$) 
\begin{align*}
W(t,x;\mathbf{B}) = g(X_T^{t,x,*}) - \int_t^T \left\langle
\lambda^*(s,X_s^{t,x,*};\mathbf{B}) , u^*(s,X_s^{t,x,*}) \right\rangle ds.
\end{align*}
It follows in particular that for the original stochastic control problem 
\begin{eqnarray*}
V(t,x)\;=\; \sup_{\nu \in \mathcal{A}} {\mathbb{E}} \left[ g(X_T^{t,x,\nu}) %
\right] &=& {\mathbb{E}} \left[g(X_T^{t,x,*})\right] \\
&=& {\mathbb{E}} \left[g(X_T^{t,x,*}) - \int_t^T \left\langle
\lambda^*(s,X_s^{t,x,*};\mathbf{B}) , u^*(s,X_s^{t,x,*}) \right\rangle ds%
\right] \\
&=& {\mathbb{E}} \left[ \sup_{\mu \in \mathcal{M}} \left\{ g(X_T^{t,x,\mu,{%
\mathbf{\eta}}}) - \int_t^T \left\langle \lambda^*(s,X_s^{t,x,\mu,{\mathbf{%
\eta}}};{\mathbf{\eta}}) , \mu_s \right\rangle ds \right\}|_{{\mathbf{\eta}}=%
\mathbf{B}}\right].
\end{eqnarray*}
Here we have used that $\lambda^*(t,x,\mathbf{B})$ is future adapted and ${%
\mathbb{E}} [ \lambda^*(t,x,\mathbf{B}) ] = 0\ \forall t,x.$ which is shown
on p. 227 in \cite{DB}. 

\end{proof}

\begin{remark}
The two different penalizations presented above are based on verification
arguments for respectively the stochastic HJB equation and the (rough)
deterministic HJB equation. One can then also try to devise an approach
based on Pontryagin's maximum principles (both stochastic and
deterministic). While this is technically possible, the need to use
sufficient conditions in the rough PMP means that it can only apply to the
very specific case where $\sigma$ is affine in $x$, and in consequence we
have chosen not to pursue this here. 
\end{remark}

\subsection{Explicit computations in LQC problems}

\label{ss:explicitComputations}

We will compare the two optimal penalizations in the case of a linear
quadratic control problem (both for additive and multiplicative noise).

\subsubsection{LQC with additive noise}

The dynamics are given by \footnote{%
This equation admits an obvious pathwise SDE solution (via the ODE satisfied
by $X-B$) so that, strictly speaking, there is no need for rough paths here.}
\begin{eqnarray}
dX &=& (MX + N\nu) dt + dB_t
\end{eqnarray}
and the optimization problem is given by 
\begin{eqnarray}
V(t,x) &= \sup_{\nu \in \mathcal{A}} {\mathbb{E}} \left[ \frac{1}{2}\int_t^T
(\langle Q X_s, X_s \rangle + \langle R \nu_s, \nu_s\rangle) ds + \frac{1}{2}%
\langle G X_T, X_T \rangle \right].
\end{eqnarray}

This problem admits the explicit solution (see e.g. Section 6.3 in \cite{YZ}%
) 
\begin{eqnarray}
V(t,x) &=& \frac{1}{2}\langle P(t) x, x \rangle + \frac{1}{2}\int_t^T
Tr(P(s)) ds,
\end{eqnarray}
where $P$ is the solution to the matrix Riccati equation 
\begin{eqnarray*}
P(T) &=& G, \\
P'(t) &=& - P(t)M - {}^t M P(t) + P N R^{-1} {}^tN P(t) - Q,
\end{eqnarray*}
and the optimal control is then given in feedback form by 
\begin{eqnarray*}
\nu^*(t,x) &=& - R^{-1} {}^t N P(t) x.
\end{eqnarray*}

%
%
%

\begin{proposition}
For this LQ control problem the optimal penalty corresponding to Theorem \ref%
{thm:dbDuality} is given by 
\begin{align*}
z^{1}({\mathbf{\eta}},\mu) & = - \int_t^T \langle \lambda^{2}(s;{\mathbf{\eta%
}}), \mu_s \rangle ds,
\end{align*}
where 
\begin{align*}
\lambda^{1}(t; {\mathbf{\eta}}) &= - {}^t N \int_t^T e^{{}^t M (s-t)} P(s) d{%
\mathbf{\eta}}_s.
\end{align*}

The optimal penalty corresponding to Theorem \ref{thm:dualityUsingVF} is
given by 
\begin{align*}
z^{2}({\mathbf{\eta}},\mu) & = z^{1}({\mathbf{\eta}},\mu) + \gamma^R({%
\mathbf{\eta}}),
\end{align*}
where 
\begin{align*}
\gamma^R({\mathbf{\eta}}) &=\int_t^T \langle P(s) X_s^0, d{\mathbf{\eta}}_s
\rangle - \frac{1}{2} \int_t^T Tr(P(s))ds,
\end{align*}
$X^0$ denoting the solution to the RDE $dX = MX + d {\mathbf{\eta}}$
starting at $(t,x)$. 5 In particular, these two penalizations are equal
modulo a random constant (not depending on the control) with zero
expectation.
\end{proposition}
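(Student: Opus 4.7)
The plan is to compute each of the two optimal penalties explicitly for the LQC with additive noise and then compare them. Recall that for this problem $V(t,x) = \tfrac{1}{2}\langle P(t)x, x\rangle + \tfrac{1}{2}\int_t^T \Tr(P(s))\,ds$ and the optimal feedback is $u^*(t,x) = -R^{-1}{}^tN P(t)x$.

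The Davis--Burstein penalty simplifies dramatically in LQC because $b_u(x,u^*)u^* = Nu^*$ cancels the $Nu^*$ piece of $b(x,u^*) = Mx + Nu^*$, so that the auxiliary dynamics \eqref{eq:dynZ} reduce to the simple linear RDE $dZ = MZ\,dt + d{\mathbf{\eta}}$, $Z_t = x$, i.e.\ $Z = X^0$. Running cost is absorbed via the extended-state construction of Remark \ref{rem:DBrunningcost}. I would solve the rough PDE for $W$ by a quadratic-linear ansatz $W(t,x;{\mathbf{\eta}}) = \tfrac{1}{2}\langle A(t)x, x\rangle + \langle B(t,{\mathbf{\eta}}), x\rangle + C(t,{\mathbf{\eta}})$: matching the quadratic coefficient reproduces the Riccati equation and forces $A \equiv P$, while matching the linear-in-$x$ coefficient yields a backward linear RDE for $B$ with explicit solution $B(t,{\mathbf{\eta}}) = \int_t^T e^{{}^tM(s-t)}P(s)\,d{\mathbf{\eta}}_s$. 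Consequently $D_xW = P(t)x + B(t,{\mathbf{\eta}})$, and the contribution $Ru^*(t,x) = -{}^tNP(t)x$ from the extended $\bar b_u$ exactly cancels the quadratic-in-$x$ piece of ${}^tND_xW$, leaving $\lambda^{1}(t;{\mathbf{\eta}})$ proportional to ${}^tNB(t,{\mathbf{\eta}})$, as claimed.

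For the Rogers penalty the optimal choice is $h^* = V$, which is smooth. I would apply the rough chain rule to $V(s,X_s)$, using that additive noise gives $b = \tilde b$ and that $D^2V = P(s)$ is constant in $x$. The chain-rule drift terms cancel against the corresponding pieces of $(\partial_s + L^{\mu_s})V$, leaving
\begin{align*}
M^V_{t,T} = \int_t^T \langle P(s)X_s, d{\mathbf{\eta}}_s\rangle - \tfrac{1}{2}\int_t^T \Tr(P(s))\,ds.
\end{align*}
To expose the $\mu$-dependence, use linearity of the state equation to write $X_s = X_s^0 + \Delta_s$ with $\Delta_s = \int_t^s e^{M(s-r)}N\mu_r\,dr$, substitute, and apply Fubini to interchange the $dr$- and $d{\mathbf{\eta}}_s$-integrals:
\[
\int_t^T \langle P(s)\Delta_s, d{\mathbf{\eta}}_s\rangle = \int_t^T \langle {}^tN B(r, {\mathbf{\eta}}), \mu_r\rangle\,dr.
\]
Collecting terms yields $M^V_{t,T} = \gamma^R({\mathbf{\eta}}) + \int_t^T\langle{}^tNB(r,{\mathbf{\eta}}),\mu_r\rangle\,dr$; matching against $z^1$ then gives $z^2 = z^1 + \gamma^R$ (after accounting for the sign conventions in the definitions of the two penalties).

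It remains to verify ${\mathbb{E}}[\gamma^R(\mathbf{B})] = 0$. Under ${\mathbf{\eta}} = \mathbf{B}$, the rough integral $\int \langle PX^0, d\mathbf{B}\rangle$ is the Stratonovich integral; converting to It\^o and observing that $d(PX^0)$ has martingale part $P\,dB$ produces an It\^o--Stratonovich correction of $\tfrac{1}{2}\int_t^T\Tr(P(s))\,ds$, which exactly cancels the deterministic term in $\gamma^R$, and the resulting It\^o integral has zero mean. The only mildly delicate step is the Fubini swap between the rough and Lebesgue integrations, but since $\Delta$ is absolutely continuous uniformly in $\mu$, classical Fubini applies to smooth approximations $\eta^n \to {\mathbf{\eta}}$ and one passes to the rough-path limit using continuity of the rough (here Young) integral in the integrand, which is controlled by the Lipschitz dependence of $s \mapsto e^{{}^tM(s-r)}P(s)$.
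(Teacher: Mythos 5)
Your proposal is correct, and for the part of the statement that the paper actually proves — the Rogers penalization — it follows the paper's argument step for step: apply Remark \ref{rem:RogersInt} with $h^*=V$, use $DV(s,x)=P(s)x$ and $D^2V=P(s)$ to get $M^{V}_{t,T}=\int_t^T\langle P(s)X_s,d{\mathbf{\eta}}_s\rangle-\frac{1}{2}\int_t^T\Tr(P(s))\,ds$, split $X_s=X^0_s+\int_t^s e^{M(s-r)}N\mu_r\,dr$ by linearity, and Fubini the double integral to isolate the $\mu$-dependent term as $\int_t^T\langle {}^tN\int_r^Te^{{}^tM(s-r)}P(s)\,d{\mathbf{\eta}}_s,\mu_r\rangle\,dr$, i.e.\ $z^1$ up to the sign conventions. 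The two places where you go beyond the paper are both worthwhile: (a) for $z^1$ the paper simply cites the computation in \cite[sec.~2.4]{DB}, whereas you re-derive it via the quadratic-linear ansatz for $W$ (noting that $b_u u^*=N u^*$ cancels so that $Z=X^0$, and that the Riccati equation forces the quadratic coefficient to be $P$, the linear coefficient solving the backward equation $-\dot B={}^tMB+P\dot\eta$) — this is essentially the paper's ``sketch of the Davis--Burstein argument'' specialized to LQC, made self-contained; (b) the paper dismisses ${\mathbb{E}}[\gamma^R(\mathbf{B})]=0$ as immediate, while you supply the actual reason, namely that the It\^o--Stratonovich correction of $\int\langle P(s)X^0_s,\circ dB_s\rangle$ is exactly $\frac{1}{2}\int_t^T\Tr(P(s))\,ds$ and the remaining It\^o integral is a true martingale increment. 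Your remark on justifying the Fubini swap is also sound, and in this additive-noise case is even simpler than you suggest, since $s\mapsto P(s)\Delta_s$ is Lipschitz and the integral against $d{\mathbf{\eta}}$ is a Young integral.
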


\begin{proof}
The formula for $z^{1}$ is in fact already computed in \cite[sec. 2.4]{DB},
so that it only remains to do the computation for the Rogers penalization.

It follows from Remark \ref{rem:RogersInt} that 
\begin{eqnarray*}
M^{t,x,\mu,{\mathbf{\eta}},V}_{t,T} &=& \int_t^T \langle DV(s,X_s), d{%
\mathbf{\eta}}_s \rangle - \frac{1}{2} \int_t^T Tr(D^2V(s,X_s))ds \\
&=& \int_t^T \langle P(s)X_s^\mu, d{\mathbf{\eta}}_s \rangle - \frac{1}{2}
\int_t^T Tr(P(s))ds \\
&=& \int_t^T \langle P(s)(X_s^0 + \int_0^s e^{M(s-r)} N \mu_r dr), d{\mathbf{%
\eta}}_s \rangle - \frac{1}{2} \int_t^T Tr(P(s))ds \\
&=& \int_t^T \langle \mu_r, ( {}^t N \int_r^T e^{{}^tM(s-r)} P(s)d{\mathbf{%
\eta}}_s) \rangle dr + \int_t^T \langle P(s)X_s^0, d{\mathbf{\eta}}_s
\rangle - \frac{1}{2}\int_t^T Tr(P(s))ds.
\end{eqnarray*}

Hence we see that this penalization can be written as $z^2 = z^{1} +
\gamma^R({\mathbf{\eta}})$, where $\gamma^R({\mathbf{\eta}})$ does not
depend on the chosen control. One can check immediately that ${\mathbb{E}}
[\gamma^R({\mathbf{\eta}})\vert_{{\mathbf{\eta}} = \mathbf{B}(\omega)}]=0$.

%
%
%
\end{proof}

\subsubsection{LQC with multiplicative noise}

Let the dynamics be given by 
\begin{eqnarray}
dX &=& (MX + N\nu) dt + \sum_{i=1}^{n} C_i X \circ dB^i_t \\
&=& (\tilde M X + N \nu) dt + \sum_{i=1}^{n} C_i X dB^i_t.
\end{eqnarray}

Denote by $X^{t,x,\mu,{\mathbf{\eta}}}$ the solution starting from $x$ at
time $t$ to 
\begin{align*}
d X^{t,x,\mu,{\mathbf{\eta}}}_s = (MX^{t,x,\mu,{\mathbf{\eta}}}_s + N \mu)
dt + \sum_{i=1}^{n} C_i X^{t,x,\mu,{\mathbf{\eta}}}_s d{\mathbf{\eta}}^i_t \\
\end{align*}
and by $\Gamma_{t,s}$ the (matrix) solution to the RDE 
\begin{align*}
d_s \Gamma_{t,s} = M\Gamma_{t,s} ds + \sum_{i=1}^{n} C_i \Gamma_{t,s} d{%
\mathbf{\eta}}_s, \;\;\; \Gamma_{t,t} = I
\end{align*}
Then 
\begin{align*}
X^{t,x,\mu,{\mathbf{\eta}}}_s = \Gamma_{t,s} x + \int_t^s \Gamma_{r,s} N
\mu_r dr.
\end{align*}
For simplicity we now take $d=n=1$: the general case is only notationally
more involved.

The optimization problem is given by 
\begin{eqnarray}
V(t,x) &= \sup_{\nu \in \mathcal{A}} {\mathbb{E}} \left[ \frac{1}{2}
\int_t^T (Q X_s^2 + R \nu_s^2) ds + \frac{1}{2} G X_T^2 \right].
\end{eqnarray}

By Section 6.6 in \cite{YZ} the value function is again given as 
\begin{align*}
V(t,x) &= \frac{1}{2}P_t x^2
\end{align*}
and the optimal control as 
\begin{align*}
u^*(t,x) = -R^{-1} N P_t x,
\end{align*}
where 
\begin{align}  \label{eq:riccati}
\begin{split}
&\dot P_t + 2 P_t M + 2 P_t C^2 + Q - N^2 R^{-1}P_t^2 = 0, \qquad P_T = G.
\end{split}%
\end{align}
We can then compute explicitely the Davis--Burstein and Rogers penalties :

\begin{proposition}
For $t \leq r \leq T$, define 
\begin{align*}
\Theta_{r} &= \int_r^T P_s \Gamma^2_{r,s} (d{\mathbf{\eta}}_s - C ds).
\end{align*}

Then the optimal penalty corresponding to Theorem \ref{thm:dbDuality} is
given by 
\begin{align*}
z^{1}({\mathbf{\eta}},\mu) & = - CN x \int_t^T \Theta_s \mu_s ds,
\end{align*}
while the optimal penalty corresponding to Theorem \ref{thm:dualityUsingVF}
is given by 
\begin{align*}
z^{2}({\mathbf{\eta}},\mu) & = C \Theta_{t} x^2 + C N x \int_t^T \bar
\Gamma_{t,s} \Theta_{s} \mu_s ds + C N^2 \int_t^T \int_t^T \Gamma_{r \wedge
s, r\vee s} \Theta_{r \vee s} \mu_r \mu_s dr ds.
\end{align*}
\end{proposition}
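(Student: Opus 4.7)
The plan is to compute both penalties by direct substitution into the formulas provided by Theorem \ref{thm:dualityUsingVF} (with $h = V$) and Theorem \ref{thm:dbDuality} (with the optimal feedback $u^*$), exploiting that the linearity of the LQC dynamics yields the explicit representation $X^{t,x,\mu,\eta}_s = \Gamma_{t,s} x + N \int_t^s \Gamma_{r,s} \mu_r\,dr$ and that the value function is $V(s,x) = \tfrac12 P_s x^2$ with $P$ satisfying the Riccati equation \eqref{eq:riccati}. The common building block is the compensated rough integral $\Theta_r = \int_r^T P_s \Gamma_{r,s}^2 (d\eta_s - C\,ds)$; its mean-zero property under Brownian lift reflects the Stratonovich-to-It\^o conversion $P_s \Gamma_{r,s}^2 \circ dB_s = P_s \Gamma_{r,s}^2\,dB_s + CP_s \Gamma_{r,s}^2\,ds$, so $\Theta_r$ is really an It\^o integral.

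For the Rogers penalty $z^{2}$, I apply Theorem \ref{thm:dualityUsingVF} with $h = V$ and use the integral form from Remark \ref{rem:RogersInt}. Plugging in $DV = P_s X_s$ and $D^2 V = P_s$, the It\^o-Stratonovich correction $\langle b - \tilde b, DV\rangle$ and the trace term combine with the rough integral $\langle DV, \sigma\,d\eta\rangle = CP_s X_s^2\,d\eta_s$ to give the compact expression $M^{V}_{t,T} = C \int_t^T P_s X_s^2 (d\eta_s - C\,ds)$. Squaring $X_s$ produces three terms of orders $x^2$, $x\mu$, and $\mu\mu$; the first directly yields $C\Theta_t x^2$, and for the remaining two I apply Fubini together with the cocycle relation $\Gamma_{r_1,s}\Gamma_{r_2,s} = \Gamma_{r_1\wedge r_2,\, r_1\vee r_2}\,\Gamma_{r_1\vee r_2,\,s}^{\,2}$ (which in particular gives $\Gamma_{t,s}\Gamma_{r,s} = \Gamma_{t,r}\Gamma_{r,s}^2$ for $t \le r$) to identify the inner integrals as $\Theta$-expressions, recovering the stated formula.

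For the Davis-Burstein penalty $z^{1}$, I apply Theorem \ref{thm:dbDuality} together with the running-cost adaptation from Remark \ref{rem:DBrunningcost}. The optimal feedback $u^*(t,x) = -R^{-1}NP_t x$ causes the $Nu^*$ term to cancel the correction $-b_u u^*$ in the $Z$-dynamics, leaving $dZ = MZ\,dt + CZ\,d\eta$ and therefore $Z^{t,x,\eta}_s = \Gamma_{t,s}x$. Computing $W$ and the augmented running-cost coordinate in closed form and differentiating in the starting position gives an expression for $\lambda^*(s,y,\eta)$ proportional to $y\bigl[G\,\Gamma_{s,T}^2 + \int_s^T (Q - R^{-1}N^2 P^2)\,\Gamma_{s,u}^2\,du - P_s\bigr]$. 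The crux, and the principal technical obstacle, is to collapse this bracket into a scalar multiple of $\Theta_s$: applying the rough chain rule to $u \mapsto P_u \Gamma_{s,u}^2$ and using the Riccati equation to substitute for $\dot P$, the non-Riccati deterministic pieces cancel exactly, leaving a multiple of $\int_s^T P_u \Gamma_{s,u}^2(d\eta_u - C\,du) = \Theta_s$; integrating against $\mu_s\,ds$ then yields the stated expression for $z^{1}$.

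A preliminary step is to verify the hypotheses of Theorem \ref{thm:dbDuality} for the augmented LQC system: the $u$-dependent part of $\langle b^{\mathrm{aug}}, DW^{\mathrm{aug}}\rangle$ is a scalar multiple of $Nu\,\partial_x W + \tfrac12 Ru^2$, which is strictly concave in $u$ under the standard LQC sign convention on $R$, and the required smoothness of $u^*$ and of $W(\cdot,\cdot;\eta)$ follows from the explicit closed-form expressions together with smoothness of $P$ and the smooth dependence of the linear RDE flow $\Gamma$ on its initial position. Once both penalties are in the stated form, the final assertion in the additive case (equality modulo a random constant of zero mean) is transparent from comparing terms, and in the multiplicative case the two expressions can be compared term-by-term.
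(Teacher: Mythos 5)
Your proposal is correct and follows essentially the same route as the paper: both penalties are computed by direct substitution, using the closed-form solution $X^{t,x,\mu,\eta}_s=\Gamma_{t,s}x+N\int_t^s\Gamma_{r,s}\mu_r\,dr$, Remark \ref{rem:RogersInt} with $h=V=\tfrac12 P_t x^2$ plus Fubini and the flow (cocycle) identity for $z^{2}$, and Remark \ref{rem:DBrunningcost} with the reduced $Z$-dynamics $dZ=MZ\,dt+CZ\,d\eta$ followed by the Riccati/product-rule collapse of the bracket into $\Theta$ for $z^{1}$. The added verification of the concavity and smoothness hypotheses is a harmless (and welcome) supplement not present in the paper's proof.
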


\begin{proof}
The optimal penalty stemming from Theorem \ref{thm:dbDuality} (see also
Remark \ref{rem:DBrunningcost}) is given by $\int_t^T \lambda^*(r,x) \mu_r
dr $, where 
\begin{align*}
\lambda^*(r,x) &= N \left(G Z^1_T \partial_x Z^1_T + \partial_x Z^2_T\right)
- NP(r) x,
\end{align*}
where 
\begin{align*}
dZ^1 &= M Z^1 ds + C Z^1 d{\mathbf{\eta}}_t, \;\;\; Z^1_r=x; \\
dZ^2 &= \frac{1}{2} \left(Q-N^2R^{-1} P(s)^2 \right) (Z^1)^2 ds,\;\;\;
Z^2_r=0.
\end{align*}

Since $Z^1_s = \Gamma_{r,s} x$, this is computed to 
\begin{align*}
\lambda^*(r,x) &= N x \left( G \Gamma_{r,T}^2 + \int_r^T \left(Q-N^2R^{-1}
P(s)^2 \right) \Gamma_{r,s}^2 ds - P(r) \right). \\
&= Nx \left( \left[ P(s) \Gamma_{r,s}^2 \right]_{s=r}^T + \int_r^T \left( -
\dot P(s) - 2M P(s) - 2 C^2 P(s) \right) \Gamma_{r,s}^2 ds \right) \\
&= Nx \left( \int_r^T P(s) \Gamma_{r,s}^2 (C d{\mathbf{\eta}}_s - C^2 ds )
\right) \\
&= NCx \Theta_r.
\end{align*}

For the optimal penalty corresponding to Theorem \ref{thm:dualityUsingVF},
we apply again Remark \ref{rem:RogersInt} to see that the optimal penalty is
given by 
\begin{align*}
M_{t,T}^{t,x,\mu,{\mathbf{\eta}},V} &= \int_t^T \langle DV(s,X_s), C X d{%
\mathbf{\eta}}_s \rangle - \int_t^T \func{Tr}[ C^2 X^2 D^2 V(s,X_s) ]ds \\
&= \int_t^T P_r C |X^{t,x,\mu,{\mathbf{\eta}}}_r|^2 d{\mathbf{\eta}}_r -
\int_t^T C^2 |X^{t,x,\mu,{\mathbf{\eta}}}_r|^2 P_r dr.
\end{align*}

It only remains to perform straightforward computations expanding the
quadratic terms and applying Fubini's theorem.
\end{proof}


\section{Appendix: RDEs with controlled drift}

\begin{theorem}[RDE with controlled drift]
\label{Thm:RDEWithControlledDrift} Let $\alpha \in (1/3,1/2]$. 
Let ${\mathbf{\eta}} \in \mathcal{C}^{0,\alpha}$ a geometric $\alpha$%
-H\"older rough path. 
Let $\gamma > \frac{1}{\alpha}$. 
Let $U$ be the subset of a separable Banach space. Let $b: {\mathbb{R}} ^e
\times U \to {\mathbb{R}} ^e$ sucht that $b(\cdot,u) \in \func{Lip}^1( {%
\mathbb{R}} ^e)$ uniformly in $u \in U$ (i.e. $\sup_{u\in U} ||b(\cdot,u)||_{%
\func{Lip}^1( {\mathbb{R}} ^e)} < \infty$) and such that $u \mapsto
b(\cdot,u)$ is measurable. Let $\sigma_1,\dots,\sigma_d \in \func{Lip}%
^\gamma( {\mathbb{R}} ^e)$. Let $\mu: [0,T] \to U$ be measurable, i.e. $\mu
\in \mathcal{M}$.

(i) Then there exists a unique $Y \in \mathcal{C}^{0,\alpha}$ that solves 
\begin{equation*}
Y_t = y_0 + \int_0^t b( Y_r, \mu_r ) dr + \int_0^t \sigma( Y ) d{\mathbf{\eta%
}}_r.
\end{equation*}

Moreover the mapping 
\begin{align*}
(x_0,\mathbf{\eta}) \mapsto Y \in \mathcal{C}^{0,\alpha}
\end{align*}
is locally Lipschitz continuous, uniformly in $\mu\in\mathcal{M}$.


(ii) Assume moreoever that $U \ni u \mapsto b(\cdot,u) \in \func{Lip}^1( {%
\mathbb{R}} ^e)$ is Lipschitz. If we use the topology of convergence in
measure on $\mathcal{M}$ then 
\begin{align}  \label{eq:jointContinuity}
\begin{split}
\mathcal{M} \times {\mathbb{R}} ^e \times \mathcal{C}^{0,\alpha} &\to 
\mathcal{C}^{0,\alpha} \\
(\mu,x_0,{\mathbf{\eta}}) &\mapsto Y,
\end{split}%
\end{align}
is continuous.

(iii) If $\nu: \Omega \times [0,T] \to U$ is progressively measurable and $%
\mathbf{B}$ is the Stratonovich rough path lift of a Brownian motion $B$,
then 
\begin{align}  \label{eq:roughEqualsClassical}
\left. Y \right\vert_{\mu=\nu,{\mathbf{\eta}}=\mathbf{B}} = \tilde Y, \qquad 
\P -a.s.,
\end{align}
where $\tilde Y$ is the (classical) solution to the controlled SDE 
\begin{align*}
\tilde Y_t = y_0 + \int_0^t b( \tilde Y_r, \nu_r ) dr + \int_0^t \sigma(
\tilde Y ) \circ dB_r.
\end{align*}

(iv) If moreover $\sigma_1,\dots,\sigma_d \in \func{Lip}^{\gamma+2}( {%
\mathbb{R}} ^e)$, we can write $Y = \phi(t, \tilde Y_t)$ where $\phi$ is the
solution flow to the RDE 
\begin{align*}
\phi(t,x) = x + \int_0^t \sigma(\phi(r,x)) d{\mathbf{\eta}}_r,
\end{align*}
and $\tilde Y$ solves the classical ODE 
\begin{align*}
\tilde Y_t = x_0 + \int_0^t \tilde b(r, \tilde Y_r, \mu_r) dr,
\end{align*}
where we define componentwise 
\begin{align*}
\tilde b(t,x,u)_i = \sum_k \partial_{x_k} \phi^{-1}_i( t, \phi(t,x) )
b_k(\phi(t,x), u).
\end{align*}
\end{theorem}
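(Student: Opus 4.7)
The plan is to reduce all four parts to known results in rough path theory by treating the measurable, time-dependent drift $r \mapsto b(\cdot, \mu_r)$ as a single object. Under the assumptions that $b(\cdot, u)$ is $\func{Lip}^1$ uniformly in $u$ and that $u \mapsto b(\cdot, u)$ is measurable, the map $s \mapsto b(\cdot, \mu_s)$ is a uniformly bounded, measurable $\func{Lip}^1(\mathbb{R}^e)$-valued path. Consequently the equation is an RDE with a (time-dependent, Lipschitz-in-space) drift driven by an $\alpha$-Hölder rough path, which is exactly the setting treated in \cite[ch.~12]{FV}. Picard iteration in the space of $\alpha$-Hölder controlled paths gives existence and uniqueness of $Y \in \mathcal{C}^{0,\alpha}$, together with local Lipschitz continuity of $(x_0, \mathbf{\eta}) \mapsto Y$. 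Since the stability constants depend only on $\sup_u \|b(\cdot,u)\|_{\func{Lip}^1}$ and the rough-path size, the bounds are automatically uniform in $\mu \in \mathcal{M}$, giving (i).

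For (ii), the idea is to revisit the usual stability estimate in the Picard argument. If $Y^n$ and $Y$ are the solutions corresponding to $(\mu^n, x_0^n, \mathbf{\eta}^n)$ and $(\mu, x_0, \mathbf{\eta})$ respectively, then on any bounded set of initial data and drivers one obtains
\[
\|Y^n - Y\|_{\alpha\text{-H\"{o}l}} \le C\left(|x_0^n - x_0| + \rho_{\alpha\text{-H\"{o}l}}(\mathbf{\eta}^n, \mathbf{\eta}) + \int_0^T \|b(\cdot, \mu^n_r) - b(\cdot, \mu_r)\|_{\func{Lip}^1}\, dr\right).
\]
Under the stronger assumption that $u \mapsto b(\cdot, u)$ is Lipschitz into $\func{Lip}^1$, the integrand is uniformly bounded and pointwise controlled by $L\, d_U(\mu^n_r, \mu_r)$, so convergence of $\mu^n$ to $\mu$ in measure combined with dominated convergence forces the last term to zero, proving joint continuity of the solution map.

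For (iii), a pathwise Wong--Zakai argument suffices: approximate $\mathbf{B}$ by the canonical lifts $\mathbf{B}^n$ of piecewise-linear interpolations $B^n$; by (i), the solutions $Y^n$ converge in $\alpha$-Hölder to $Y|_{\mathbf{\eta}=\mathbf{B}}$, while for adapted $\nu$ each $Y^n$ coincides with the classical ODE solution driven by $B^n$ along the same control, which converges to the Stratonovich solution $\tilde Y$; the a.s.\ identity follows along a subsequence. For (iv), the higher regularity $\sigma_i \in \func{Lip}^{\gamma+2}$ ensures by the flow theorem of \cite[ch.~11]{FV} that the pure rough flow $\phi$ of $d\phi = \sigma(\phi)\, d\mathbf{\eta}$ is a $C^1$-diffeomorphism in $x$ with $C^1$ inverse, so that the formula for $\tilde b$ defines a Lipschitz-in-$x$ time-dependent drift. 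Setting $\tilde Y_t := \phi^{-1}(t, Y_t)$ and applying the rough chain rule to $Y_t = \phi(t, \tilde Y_t)$, the $d\mathbf{\eta}$-terms cancel by definition of $\phi$, and matching the drift terms via the inverse function identity $D\phi(t, \tilde Y_t) \tilde b(t, \tilde Y_t, \mu_t) = b(Y_t, \mu_t)$ produces the claimed ODE for $\tilde Y$.

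The main obstacle I expect is (ii): parts (i), (iii), and (iv) are fairly direct once the drift has been recast in a rough-path-compatible way, but the joint continuity in $\mu$ with respect to convergence in measure requires inspecting the Picard estimate carefully to confirm that the $\mu$-dependence enters only through the $L^1([0,T]; \func{Lip}^1)$ norm of $r \mapsto b(\cdot, \mu_r)$, which is precisely what makes dominated convergence applicable. Everything else is bookkeeping once this stability estimate is in hand.
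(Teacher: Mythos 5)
Your proposal is correct and follows the same core strategy as the paper: recast $r \mapsto b(\cdot,\mu_r)$ as a bounded, measurable $\operatorname{Lip}^1(\mathbb{R}^e)$-valued path of bounded variation and solve the RDE driven jointly by this path and $\mathbf{\eta}$, with all $\mu$-dependence entering only through $\sup_u\|b(\cdot,u)\|_{\operatorname{Lip}^1}$. The differences lie in which off-the-shelf results are invoked at each step. For (i), the paper sets $Z^\mu_t=\int_0^t b(\cdot,\mu_r)\,dr$ as a Bochner integral in $\operatorname{Lip}^1(\mathbb{R}^e)$, builds the joint rough path over $(Z^\mu,\mathbf{\eta})$ via complementary Young regularity, and applies the infinite-dimensional Lyons--Qian existence theorem with the evaluation operator $f(y)V=V(y)$ as vector field; your direct Picard iteration is explicitly acknowledged in a remark of the paper as an equivalent route arriving at the same regularity demands on the coefficients. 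For (ii), both arguments reduce continuity in $\mu$ to convergence of the drift path; note only that the rough-path stability estimate is naturally phrased in a H\"older norm of $Z^{\mu^n}-Z^\mu$, so your $L^1$-in-time bound should be combined with the uniform $1$-H\"older bound and interpolation to obtain $\beta$-H\"older convergence for every $\beta<1$, which is precisely what the paper does. For (iii), you use Wong--Zakai approximation by piecewise-linear lifts, whereas the paper identifies the rough and Stratonovich integrals via Riemann sums as in Friz--Hairer; your route has the side benefit of handling the measurability of $\omega\mapsto Y|_{\mu=\nu,\mathbf{\eta}=\mathbf{B}}$ implicitly (as an a.s.\ limit of adapted ODE solutions), while the paper establishes it separately via a Pettis-theorem argument. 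Part (iv) is the same flow-decomposition argument in both cases, for which the paper simply cites earlier work.
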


\begin{remark}
In the last case, i.e. point (iv), we can immediately use results in \cite%
{FV} (Theorem 10.53) to also handle linear vector fields. 
\end{remark}

\begin{proof}
Denote for $\mu \in \mathcal{M}$ 
\begin{align*}
Z^\mu_t(\cdot) := \int_0^t b(\cdot,\mu_r) dr,
\end{align*}
which is a well defined Bochner integral in the space $\func{Lip}^1( {%
\mathbb{R}} ^e)$ (indeed, by assumption on $b$, $\int_0^t
||b(\cdot,\mu_r)||_{\func{Lip}^1( {\mathbb{R}} ^e)} dr < \infty$). Then $%
Z^\mu \in C^{1-H\ddot{o}lder}( [0,T], \func{Lip}^1( {\mathbb{R}} ^e) )$.
Indeed 
\begin{align*}
||Z^\mu_t - Z^\mu_s||_{\func{Lip}^1( {\mathbb{R}} ^e)} &= ||\int_s^t
b(\cdot, \mu_r) dr||_{\func{Lip}^1( {\mathbb{R}} ^e)} \\
&\le \int_s^t ||b(\cdot, \mu_r)||_{\func{Lip}^1( {\mathbb{R}} ^e)} dr \\
&\le |t-s| \sup_{u\in\mathcal{U}} ||b(\cdot, u)||_{\func{Lip}^1( {\mathbb{R}}
^e)},
\end{align*}
so 
\begin{align}  \label{eq:indpOfU}
||Z^\mu||_{1-H\ddot{o}lder} \le \sup_{u\in\mathcal{U}} ||b(\cdot, u)||_{%
\func{Lip}^1( {\mathbb{R}} ^e)} < \infty,
\end{align}
independent of $\mu \in \mathcal{M}$.

By Theorem \ref{thm:RDEExistenceLispchitzness} we get a unique solution to
the RDE 
\begin{align*}
dY = f(Y) dZ^\mu + \sigma(Y) d{\mathbf{\eta}}
\end{align*}
where $f: {\mathbb{R}} ^e \to L(\func{Lip}^1( {\mathbb{R}} ^e), {\mathbb{R}}
^e)$ is the evaluation operator, i.e. $f(y) V := V(y)$. 
This gives existence of the controlled RDE as well as continuity in the
starting point and in ${\mathbf{\eta}}$. By \eqref{eq:indpOfU}, this is
independent of $\mu \in \mathcal{M}$ and we hence have shown (i).

Concerning (ii), assume now that $U \ni u \mapsto b(\cdot,u) \in \func{Lip}%
^1 $ is Lipschitz. Using the representation given in the proof of (i) it is
sufficient to realize that if $\mu^n \to \mu \in \mathcal{M}$ in measure,
then $Z^{\mu^n} \to Z^\mu$ in $C^{ \beta-H\ddot{o}lder}([0,T],\func{Lip}^1( {%
\mathbb{R}} ^e))$, for all $\beta < 1$. 

Concerning (iii): first of all, 
we can regard $\nu$ as a measurable mapping from $(\Omega, {\mathcal{F}} )$
into the space of all measurable mappings from $[0,T] \to U$ with the
topology of convergence in measure. Indeed, if $U$ is a compact subset of a
separable Banach space, then this follows from the equivalence of weak and
strong measurability for Banach space valued mappings (Pettis Theorem, see
Section V.4 in \cite{yosida}). If $U$ is a general subset of a separable
Banach space, then define $\nu^n: \Omega \to \mathcal{M}$ with $%
\nu^n(\omega)_t := \Phi^n( \nu(\omega)_t )$. Here $\Phi^n$ is a (measurable)
nearest-neighbor projection on $\{x_1,\ldots, x_n\}$, the sequence $(x_k)_{k
\geq 0}$ being dense in the Banach space. Then $\nu^n$ is taking values in a
compact set and hence by the previous case, is measurable as a mapping to $%
\mathcal{M}$. Finally $\nu$ is the pointwise limit of the $\nu^n$ and hence
also measurable. 


Hence $\left. Y \right\vert_{\mu=\nu,{\mathbf{\eta}}=\mathbf{B}}$ is
measurable, as the concatentation of measurable maps (here we use the joint
continuity of RDE solutions in the control and the rough path, i.e.
continuity of the mapping \eqref{eq:jointContinuity}).

Now, to get the equality \eqref{eq:roughEqualsClassical}: we can argue as in 
\cite{FH} using the Riemann sum representation of stochastic integral. 

(iv) This follows from Theorem 1 in \cite{D12} or Theorem 2 in \cite{CDFO}.
\end{proof}

\begin{remark}
One can also prove ``by hand'' existence of a solution, using a fixpoint
argument, like the one used in \cite{gubinelli}. This way one arrives at the
same regularity demands on the coefficients. Using the infinite dimensional
setting makes it possible to immediately quote existing results on
existence, which shortens the proof immensely. We thank Terry Lyons for
drawing our attention to this fact. 
\end{remark}

In the proof of the previous theorem we needed the following version of
Theorem 6.2.1 in \cite{LQ}.

\begin{theorem}
\label{thm:RDEExistenceLispchitzness} Let $V,W,Z$ be some Banach spaces. Let
tensor products be endowed with the projective tensor norm. \footnote{%
See \cite{LyonsStFlour} p. 18 for more on the choice of tensor norms which,
of course, only matter in an infinite dimensional setting.} Let $\alpha \in
(1/3,1/2]$. and ${\mathbf{\eta}} \in \mathcal{C}^{0,\alpha}(W)$ and $Z \in
C^{\beta-H\ddot{o}lder}([0,T], V)$ for some $\beta > 1 - \alpha$. Let $f: Z
\to L(V,Z)$ be $\func{Lip}^1$, let $g: Z \to L(W,Z)$ be $\func{Lip}^\gamma$, 
$\gamma > p$. Then there exists a unique solution $Y \in \mathcal{C}%
^{0,\alpha}(Z)$ to the RDE 
\begin{align*}
dY = f(Y) dZ + g(Y) d{\mathbf{\eta}},
\end{align*}
in the sense of Lyons. \footnote{%
See e.g. Definition 5.1 in \cite{LyonsStFlour}.}

Moreover for every $R>0$ there exists $C = C(R)$ such that 
\begin{align*}
\rho_{\alpha-\text{H\"{o}l}}( Y, \bar Y ) \le C ||Z - \bar Z||_{\beta-H\ddot{%
o}lder}
\end{align*}
whenever $(Z,X)$ and $(\bar Z, X)$ are two driving paths with $||Z||_{\beta-H%
\ddot{o}lder;[0,T]}, ||\bar Z||_{\beta-H\ddot{o}lder;[0,T]}, ||X||_{\alpha-H%
\ddot{o}lder;[0,T]} \le R$.
\end{theorem}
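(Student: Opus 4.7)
The plan is to reduce the statement to the standard existence, uniqueness and continuity theory for RDEs driven by a single rough path, by packaging $Z$ and $\mathbf{\eta}$ into a joint $\alpha$-H\"{o}lder rough path over the direct sum $V \oplus W$. Since $\beta > 1-\alpha$ and $\alpha \le 1/2$ force $\beta > 1/2$, the drift driver $Z$ is significantly more regular than $\mathbf{\eta}$, and this is precisely what makes a Young-type pairing possible.

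First, I would construct the joint rough path via a Young pairing (along the lines of Ch.~9 in \cite{FV}). Since $\beta > 1/2$, the iterated integral of $Z$ with itself exists as a classical Young integral, and the cross-iterated integrals $\int Z \otimes d\mathbf{\eta}$ and $\int \mathbf{\eta} \otimes dZ$ are well-defined Young/Young--Lyons integrals because $\alpha+\beta > 1$. This produces a weak geometric $\alpha$-H\"{o}lder rough path $\mathbf{X}=(\mathbf{Z},\mathbf{\eta}) \in \mathcal{C}^{0,\alpha}(V \oplus W)$ whose homogeneous norm is controlled by $\|Z\|_{\beta-H\ddot{o}lder}$ and $\|\mathbf{\eta}\|_{\alpha-\text{H\"{o}l}}$. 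Applying the same construction to two drivers $(Z,\mathbf{\eta})$ and $(\bar Z, \mathbf{\eta})$ and carefully tracking the Young estimates would yield the quantitative bound
\[
\rho_{\alpha-\text{H\"{o}l}}(\mathbf{X},\bar{\mathbf{X}}) \le C(R)\,\|Z-\bar Z\|_{\beta-H\ddot{o}lder}
\]
whenever both drivers have norms at most $R$.

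Next, I would bundle the vector fields into a single map $h: Z \to L(V\oplus W,Z)$, $h(y)(v,w) := f(y)v + g(y)w$, and reinterpret the equation as the single RDE $dY = h(Y)\,d\mathbf{X}$. The associated integral splits into a Young piece $\int f(Y)\,dZ$ and a genuinely rough piece $\int g(Y)\,d\mathbf{\eta}$: the former is well defined because $f \in \func{Lip}^1$ ensures $f(Y)$ is $\alpha$-H\"{o}lder and $\alpha+\beta > 1$; the latter falls under Lyons' theory since $g \in \func{Lip}^\gamma$ with $\gamma > 1/\alpha$. Existence and uniqueness of $Y \in \mathcal{C}^{0,\alpha}(Z)$ then follow from Theorem~6.2.1 in \cite{LQ}, or equivalently from the mixed Young/rough theory in Ch.~12 of \cite{FV}. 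Composing with the bound from the previous step, the local Lipschitz continuity of the It\^{o}--Lyons map with respect to the driving rough path yields the desired $\rho_{\alpha-\text{H\"{o}l}}(Y,\bar Y) \le C(R)\,\|Z-\bar Z\|_{\beta-H\ddot{o}lder}$.

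The main technical obstacle is verifying the \emph{quantitative} continuity of the Young pairing in terms of $\|Z-\bar Z\|_{\beta-H\ddot{o}lder}$ rather than just the qualitative version: one must show that the dependence on the smooth component is linear, with constants depending only on the $R$-ball. This requires a careful application of sewing-type estimates for the cross-iterated integrals, but all the ingredients are standard and are essentially worked out in \cite{FV}; the rest of the argument is a direct invocation of known RDE theory.
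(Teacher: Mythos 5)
Your proposal is correct and follows essentially the same route as the paper's proof: both construct the joint $\alpha$-H\"older rough path over $V\oplus W$ via Young pairing (using $\alpha+\beta>1$ for the cross-integrals), bundle the vector fields into $h=(f,g)$, invoke Theorem~6.2.1 of \cite{LQ} for existence and uniqueness, and obtain the Lipschitz estimate by bounding the first and second levels of the joint rough path in terms of $\|Z-\bar Z\|_{\beta-H\ddot{o}lder}$ via Young's inequality before feeding this into the continuity estimate of the It\^o--Lyons map.
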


\begin{proof}
Since $Z$ and $X$ have complementary Young regularity (i.e. $\alpha + \beta
> 1$) the joint rough path $\lambda$ over $(Z, X)$, where the missing
integrals of $Z$ and the cross-integrals of $Z$ and $X$ are defined via
Young integration. So we have 
\begin{align*}
\lambda_{s,t} = 1 + \left( 
\begin{array}{c}
Z_{s,t} \\ 
X_{s,t}%
\end{array}
\right) + \left( 
\begin{array}{cc}
\int_s^t Z_{s,r} \otimes dZ_r & \int_s^t Z_{s,r} \otimes d\eta_r \\ 
\int_s^t \eta_{s,r} \otimes dZ_r & \int_s^t \eta_{s,r} \otimes d\eta_r%
\end{array}
\right)
\end{align*}

Then, by Theorem 6.2.1 in \cite{LQ}, there exists a unique solution to the
RDE 
\begin{align*}
dY = h(Y) d\lambda,
\end{align*}
where $h = (f, g)$.

We calculate how $\lambda$ depends on $Z$. For the first level we have of
course $||\lambda^{(1)} - \bar \lambda^{(1)}||_{\alpha-H\ddot{o}lder} \le
||Z - \bar Z||_{\beta-H\ddot{o}lder}$. 
For the second level we have, by Young's inequality, 
\begin{align*}
\sup_{s < t} \frac{|\int_{s}^{t} Z_{s,r} dZ_r - \int_{s}^{t} \bar Z_{s,r}
d\bar Z_r|}{|t-s|^{2\alpha}} &\le \sup_{s < t} \frac{|\int_{s}^{t} Z_{s,r} d%
\left[ Z_r - \bar Z_r \right]}{|t-s|^{2\alpha}} + \sup_{s < t} \frac{%
|\int_{s}^{t} Z_{s,r} - \bar Z_{s,r} d\bar Z_r|}{|t-s|^{2\alpha}} \\
&\le C ||Z||_{\beta-H\ddot{o}lder} ||Z-\bar Z||_{\beta-H\ddot{o}lder},
\end{align*}
and similarily 
\begin{align*}
\sup_{s < t} \frac{|\int_{s}^{t} X_{s,r} dZ_r - \int_{s}^{t} X_{s,r} d\bar
Z_r|}{|t-s|^{2\alpha}} \le C ||Z - \bar Z||_{\beta-H\ddot{o}lder;[s,t]}.
\end{align*}


Plugging this into the continuity estimate of Theorem 6.2.1 in \cite{LQ} we
get 
\begin{align*}
\rho_{\alpha-\text{H\"{o}l}}( Y, \bar Y ) \le C ||Z - \bar Z||_{\beta-H\ddot{%
o}lder}
\end{align*}
as desired.
\end{proof}

\end{document}